\newtheorem{thm}{Theorem}[section]
\newtheorem{lemma}{Lemma}[section]
\theoremstyle{remark}
\newtheorem{remark}{Remark}
\theoremstyle{plain} \newtheorem{Thm}{Theorem}[section]
\theoremstyle{plain} 
\theoremstyle{plain} \newtheorem{Prop}[Thm]{Proposition}
\theoremstyle{plain} 
\theoremstyle{definition} 
\theoremstyle{definition} 
\theoremstyle{definition} 
\theoremstyle{definition} 
\numberwithin{equation}{section}
\numberwithin{remark}{section}
\begin{document}

\makeatletter
\title[Injectivity of spherical means]{Injectivity of spherical means on $H$-type groups}

\author{E. K. Narayanan}
\address{Department of Mathematics, Indian Institute of Science, Bangalore-12, India.}
\email{{naru@iisc.ac.in}}

\author{P. K. Sanjay}
\address{Department of Mathematics, National Institute of Technology, Calicut, India}
\email{sanjaypk@nitc.ac.in}

\author{K. T. Yasser}
\address{Department of Mathematics, National Institute of Technology, Calicut, India}
\email{yasser\_p160082ma@nitc.ac.in}

\thanks{The first named author thanks SERB, India for the financial support through MATRICS grant MTR/2018/00051}
\thanks{The second named author thanks SERB, India for the financial support through MATRICS grant MTR/2017/000741}
\thanks{The third named author thanks University Grants Commission (UGC) of India for the financial support}

\date{}
\subjclass[2010]{Primary: 43A80; secondary: 22E25, 43A90, 44A35, 42C10}
\keywords{}

\begin{abstract}
We establish injectivity results for three different spherical means on an $H$-type group, $G$. First is the standard spherical means which is defined to be the average of a function over the spheres in the complement of the center, second is the average over the product of spheres in the center and its complement, and the third is the average over the spheres defined by a homogeneous norm on $G$. If $m$ is the dimension of the center of $G$, injectivity of these spherical means is proved for the range $1 \leq p \leq \frac{2m}{m-1}$. Examples are provided to show the sharpness of our results in the first two cases.
\end{abstract}

\maketitle

\tableofcontents

\section{Introduction}

One of the problems in Integral Geometry is to determine whether a function can be determined from its averages on spheres of a fixed radius $r>0$. This leads to the question of injectivity of the so called spherical mean operator. Let $\nu_r$ be the normalized surface measure on the sphere $\{x\in \mathbb{R}^n:|x|=r \}$ in $ \mathbb{R}^n$. The spherical means of a function $f$ is then defined to be the convolution $f\ast\nu_r$: \[f\ast\nu_r(x)=\int\limits_{|x|=r}f(x-y)d\nu_r(y).\]
The above is nothing but the average of the function $f$ over the sphere of radius $r$ centered at the point $x.$
The injectivity question is then the following:

Suppose that $ f \ast \nu_r(x) = 0$ for all $x \in \mathbb R^n,$ does it follow that $f$ is identically zero?

In general, the answer to this question is no. 
For $\lambda>0$, let \[\varphi_\lambda(x)= c~\frac{J_{\frac{n}{2}-1}(\lambda|x|)}{(\lambda|x|)^{\frac{n}{2}-1}},~x \in \mathbb R^n,\]where $J_\alpha$ denotes the Bessel function of order $\alpha$ and $c$ is a constant that makes $\varphi_\lambda(0) =1$. Then it is well known that \[ \varphi_\lambda\ast\nu_r(x)=\varphi_\lambda(r)\varphi_\lambda(x),~x \in \mathbb R^n.\]
Hence, if $r> 0$ is a zero of the function $s \to J_{\frac{n}{2}-1}(\lambda s)$ (which exists) then $\varphi_\lambda\ast\nu_r$ is identically zero. On the other hand, if we consider averages over spheres of two different radii $r, s > 0,$ then a two radius theorem is true, provided $r/s$ is not a quotient of the zeroes of the Bessel function $J_{\frac{n}{2}-1}(t).$ That is, if both the convolutions $f \ast \nu_r$ and $f \ast \nu_s$ vanish identically, $f$ too vanishes identically provided $r/s$ is not a quotient of the zeroes of the Bessel function $J_{\frac{n}{2}-1}(t)$ (see \cite{MR562919}).

It is known that $\varphi_\lambda\in L^p(\mathbb{R}^n)$ if and only if $p>\frac{2n}{n-1}$. It follows that injectivity fails for $\frac{2n}{n-1}<p\leq \infty$. In \cite{MR1269222}, a one radius theorem is proved, which establishes the injectivity for the range $1\leq p\leq \frac{2n}{n-1}$. In other words, if $f \in L^p(\mathbb R^n)$ and $f \ast \nu_r$ is identically zero for a fixed radius $r > 0,$ then $f$ vanishes identically, provided $1 \leq p \leq \frac{2n}{n-1}.$

Now, consider the Heisenberg group $\mathbb{H}^n=\mathbb{C}^n\times \mathbb{R}$ with the group law \[(z,t)(w,s)=(z+w,t+s+\frac{1}{2}\Im(z\cdot\overline{w})), \]
 which makes $\mathbb{H}^n,$ a step two nilpotent Lie group. Let $\mu_r$ denote the normalized surface measure on the sphere $\{z\in\mathbb{C}^n: |z|=r \} $ viewed as a distribution on $\mathbb{H}^n$. The spherical means of a function $f$ on $\mathbb{H}^n$ is then defined to be $f\ast\mu_r(z,t)$: \[f\ast\mu_r(z,t)=\int\limits_{|w|=r}f(z-w,t-\frac{1}{2}\Im(z\cdot\overline{w}))\, d\mu_r(w). \]
 
 Using  the spectral decomposition of the sublaplacian on the Heisenberg group and the results proved by Strichartz in \cite{MR1101262}, Thangavelu \cite{MR1269222} proved the injectivity of the spherical mean operator for $L^p(\mathbb{H}^n),\, 1\leq p<\infty$. That is, if $f\in L^p(\mathbb{H}^n),1\leq p<\infty$ and $f\ast\mu_r(z,t)=0$ for all $(z,t)\in \mathbb{H}^n$, then $f$ vanishes identically. For $p = \infty,$ one has a two radius theorem for $\mathbb H^n$ which is proved using a Wiener-Tauberian theorem for the radial functions on the Heisenberg group (see \cite{MR1269218}). We refer the reader to \cite{MR1241128} and \cite{MR1191750} for related results. See also \cite{MR1714433} for a generalisation in the context of Gelfand pairs associated to $\mathbb H^n.$
  
 Extending and generalising the result in \cite{MR1269222}, we establish injectivity results for three different spherical means on an $H$-type group. In the remaining of this section we define these spherical means and state the injectivity results obtained.
 
 Let $G$ be an $H$-type group, identified with its Lie algebra $\mathfrak{g}$ via the exponential map. Then $\mathfrak{g}$ admits a an orthogonal decomposition $\mathfrak{g}=\mathfrak{v}\oplus \mathfrak{z}$, where $\mathfrak{z}$ is the center and $\mathfrak{v}$ its orthogonal complement. It is known that $\dim\mathfrak{v}$ has to be even, say $\dim\mathfrak{v}=2n$, and let $m$ denote $\dim\mathfrak{z}.$ We will identify $\mathfrak{v}$ with $\mathbb C^n$ and $\mathfrak{z}$ with $\mathbb R^m.$ This requires fixing an orthonormal basis on $\mathfrak{v}$ and $\mathfrak{z}.$ For most of our purposes, this can be an arbitrary chosen orthonormal basis, however for certain computations we will choose a basis with some properties (see \eqref{onb1}, \eqref{onb2}, \eqref{twisted}). We will write $(z,t)$ for points in $G$, where $z\in \mathbb{C}^n$ (identified with $\mathfrak{v}$) and $t\in\mathbb{R}^m$ (identified with $\mathfrak{z}$). The group law then is given by
 \[ (z,t)(w,s)=(z+w,t+s+\frac{1}{2}[z,w]),\] where $[\;,\;]$ denotes the Lie bracket. The Haar measure on $G$ is given by the Lebesgue measure on $\mathfrak{g}$ and will be denoted by $dz dt.$ Denote by $Q=2n+2m$ the homogeneous dimension of $G$.
 
 Next, we define three different spherical means and state the injectivity results. Since $m =1$ corresponds to the Heisenberg group, we will always assume that $m \geq 2$ unless explicitly stated. Let $\mu_r$ denote the normalized surface measure on the sphere $\{z\in \mathfrak{v}:|z|=r \}$ and consider the spherical means of a function $f,$
 \[ f\ast\mu_r(z,t)=\int\limits_{|w|=r}f(z-w,t-\frac{1}{2}[z,w])\, d\mu_r(w)\]
 
 \begin{thm}\label{firstinjectivity}
     Let $f\in L^p(G),\, 1\leq p\leq \frac{2m}{m-1}$. If $f\ast\mu_r(z,t)=0$ for all $(z,t)\in G,$ then $f$ vanishes identically.
 \end{thm}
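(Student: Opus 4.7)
My plan is to follow the pattern of Thangavelu's proof for the Heisenberg case, carefully tracking the role of the $m$-dimensional center --- which is precisely what produces the critical exponent $\tfrac{2m}{m-1}$. The first step is to take the partial Euclidean Fourier transform in the central variable. For $\lambda\in\mathbb R^m$, set $f^\lambda(z)=\int_{\mathbb R^m}f(z,t)\,e^{-i\langle\lambda,t\rangle}\,dt$, initially for $f\in L^1\cap L^p$ and then extended by approximation. The hypothesis $f\ast\mu_r=0$ translates to the family of twisted convolution equations
\[
f^\lambda \times_\lambda \mu_r = 0 \qquad \text{for a.e.\ } \lambda\in\mathbb R^m,
\]
where $\times_\lambda$ is convolution on $\mathfrak v\cong\mathbb C^n$ twisted by the phase $e^{-\tfrac{i}{2}\langle\lambda,[z,w]\rangle}$. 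The $H$-type identity $J_\lambda^2=-|\lambda|^2 I$ guarantees that, after dividing by $|\lambda|$, the resulting symplectic form on $\mathfrak v$ is orthogonally equivalent to the standard one on $\mathbb C^n$; hence $\times_\lambda$ is unitarily equivalent to the $|\lambda|$-twisted convolution on $\mathbb H^n$.

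Next I would invoke the special Hermite / Laguerre expansion on $\mathbb C^n$. The $|\lambda|$-twisted convolution with $\mu_r$ acts as a scalar on each Laguerre subspace with eigenvalue a constant multiple of $L_k^{n-1}(|\lambda|r^2/2)\,e^{-|\lambda|r^2/4}$. The equation $f^\lambda\times_\lambda\mu_r=0$ then forces the $k$-th special Hermite projection of $f^\lambda$ to vanish on the open set where this Laguerre factor is nonzero; so each such projection is supported, in $|\lambda|$, on the discrete zero set $\{\lambda_{k,j}\}_{j\ge 1}$. Inverting the Fourier transform in $t$, $f$ is (distributionally) a superposition of terms of the form $\varphi(z)\,g(t)$ where $\widehat g$ is supported on the sphere $\{|\lambda|=\lambda_{k,j}\}\subset\mathbb R^m$; by the standard spherical-harmonic decomposition of the Fourier transform on $\mathbb R^m$, each such $g$ is a finite sum of Bessel-type spherical functions
\[
\mathcal J_\lambda(|t|) \;=\; c_m\,\frac{J_{m/2-1}(\lambda|t|)}{(\lambda|t|)^{m/2-1}}
\]
multiplied by surface harmonics in $t/|t|$. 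Since $\mathcal J_\lambda\in L^p(\mathbb R^m)$ if and only if $p>\tfrac{2m}{m-1}$, and the tangentially twisted variants share the same threshold, Fubini on $G$ forces every coefficient in this decomposition to vanish, so $f\equiv 0$.

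The main obstacle is the passage from the distributional support statement for $f^\lambda$ to an honest spherical-function decomposition of $f$, together with the $L^p$ contradiction at the critical threshold. For $1\le p\le 2$ everything proceeds directly via Plancherel / Hausdorff--Young on the center. For $2<p\le\tfrac{2m}{m-1}$, however, $f^\lambda$ is only a tempered distribution in $\lambda$, and one has to work harder: a natural route is to convolve in the central variable with a compactly supported smooth approximate identity (which preserves the vanishing hypothesis and brings us into $L^1\cap L^\infty$ in $\lambda$), reduce to a Schwartz-class setting, invoke a Strichartz-type rigidity statement for joint eigenfunctions on $G$ in the spirit of \cite{MR1101262}, and only then pass to the limit to extract the Bessel-function $L^p$ contradiction at the sharp endpoint.
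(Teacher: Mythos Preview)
Your overall strategy---partial Fourier transform in $t$, Laguerre/special Hermite expansion, support of each $k$-piece on the spheres $\{|a|:\varphi_k^{|a|}(r)=0\}$, and then the $L^p$ threshold coming from the Bessel asymptotics on $\mathbb R^m$---is indeed the paper's strategy. The final step is exactly Theorem~\ref{na}: an $L^p(\mathbb R^m)$ function whose Fourier transform is supported on a sphere vanishes for $1\le p\le\frac{2m}{m-1}$. So for $1\le p\le 2$ your sketch is essentially complete.

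The gap is in the range $2<p\le\frac{2m}{m-1}$, and your proposed fix does not close it. To apply Theorem~\ref{na} you must first isolate, for each $k$, a function $\mathcal A_kf\in L^p(G)$ whose $t$-Fourier transform is supported on finitely many spheres. That requires knowing that the spectral projections $\mathcal A_k$ of \eqref{spectral-projection-def} are bounded on $L^p(G)$ for $p>2$, and that the spectral decomposition is (Abel) summable in $L^p$. Convolving only in the central variable regularizes $f^\lambda$ as a function of $\lambda$ but leaves $f$ no better than $L^p$ in $z$; it does not put you in $L^2(G)$ or in the Schwartz class, so you still cannot separate the $k$-pieces. The ``Strichartz-type rigidity'' you invoke is precisely the $L^p$ spectral theory of \cite{MR1101262}, but that paper treats $m=1$; extending it to $m\ge 2$ is not a citation but the main technical work of Section~3 here (Lemma~\ref{intak}, Theorem~\ref{spectraloper}, Theorem~\ref{abelsum}), where the kernels $A_k$ are shown to be Calder\'on--Zygmund on $G$ and Abel summability is established via Lemma~\ref{lm3} and Theorem~\ref{christ}.

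A smaller point: the claim that $f$ decomposes into products $\varphi(z)\,g(t)$ is not correct---the twisted structure entangles $z$ and $t$, and $\mathcal A_kf(z,t)$ is not a tensor. What one actually uses is that for almost every fixed $z$, the $t$-Fourier transform of $\mathcal A_kf(z,\cdot)$ is supported on the finite union of spheres $\{a:L_k^{n-1}(\tfrac12|a|r^2)=0\}$, and then Theorem~\ref{na} kills $\mathcal A_kf(z,\cdot)$ slice by slice. Your ``Fubini forces every coefficient to vanish'' is the right intuition, but it only becomes rigorous once you have $\mathcal A_kf\in L^p(G)$, which again needs the Calder\'on--Zygmund machinery above.
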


Next, let $\nu_s, s > 0$ be the normalized surface measure on the sphere $\{y\in\mathfrak{z}:|y|=s \}$. Consider the measure $\mu_{r,s}=\mu_r\times\nu_s$. That is,
\[\int_G~ f(z,t)\, d\mu_{r,s}(z,t)=\int_G~ f(z,t)\, d\mu_r(z)d\nu_s(t).\]
Then, we define the {{\it bi-spherical means}} of $f$ by
\[ f\ast\mu_{r,s}(z,t)=\int\limits_{|w|=r}\int\limits_{|u|=s}f(z-w,t-u-\frac{1}{2}[z,w])\, d\mu_r(w)d\nu_s(u).\]
We have the following theorem.
\begin{thm}\label{secondinjectivity}
    Let $f\in L^p(G),\, 1\leq p\leq \frac{2m}{m-1}$. If $f\ast\mu_{r,s}(z,t)=0$ for all $(z,t)\in G$ then $f\equiv 0$.
\end{thm}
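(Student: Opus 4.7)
The plan is to reduce Theorem \ref{secondinjectivity} to (the proof of) Theorem \ref{firstinjectivity} by taking a partial Fourier transform in the central variable and exploiting the fact that averaging over the central sphere $|t|=s$ contributes only a Bessel factor with a negligible zero set.

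First, for each $\lambda\in\mathbb{R}^m$, I would consider the partial Fourier transform $f^\lambda(z)=\int_{\mathbb{R}^m}f(z,t)e^{-i\langle\lambda,t\rangle}\,dt$. Changing variables via $t\mapsto t+u+\tfrac{1}{2}[z,w]$ in the definition of $f\ast\mu_{r,s}$, and using $\langle\lambda,[z,w]\rangle=\langle J_\lambda z,w\rangle$ coming from the $H$-type structure, one obtains
\[
(f\ast\mu_{r,s})^\lambda(z)\;=\;\widehat{\nu_s}(\lambda)\cdot\bigl(f^\lambda\ast_\lambda\mu_r\bigr)(z),
\]
where $\ast_\lambda$ denotes the $\lambda$-twisted convolution on $\mathfrak{v}\cong\mathbb{C}^n$, i.e.\ the same operation that drives the spectral analysis behind Theorem \ref{firstinjectivity}. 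Thus the hypothesis $f\ast\mu_{r,s}\equiv 0$ forces $\widehat{\nu_s}(\lambda)\,(f^\lambda\ast_\lambda\mu_r)(z)=0$ for all $z$ and a.e.\ $\lambda$.

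Next, since $\nu_s$ is the normalized surface measure on the Euclidean sphere of radius $s$ in $\mathbb{R}^m$, its Fourier transform equals
\[
\widehat{\nu_s}(\lambda)\;=\;c_m\,\frac{J_{m/2-1}(s|\lambda|)}{(s|\lambda|)^{m/2-1}},
\]
which vanishes only on the countable union of spheres $|\lambda|=\alpha_k/s$ with $\{\alpha_k\}$ the positive zeros of $J_{m/2-1}$; this set has Lebesgue measure zero in $\mathbb{R}^m$. Consequently, for almost every $\lambda$ we have $(f^\lambda\ast_\lambda\mu_r)(z)=0$ for all $z$. The proof of Theorem \ref{firstinjectivity} is precisely an argument, performed slicewise for a.e.\ $\lambda\neq 0$ via special Hermite/Laguerre expansions, showing that the vanishing of $f^\lambda\ast_\lambda\mu_r$ implies $f^\lambda\equiv 0$ in the range $1\leq p\leq\frac{2m}{m-1}$. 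Applying that argument here yields $f^\lambda\equiv 0$ for a.e.\ $\lambda\in\mathbb{R}^m$, and Fourier uniqueness in $t$ then gives $f\equiv 0$.

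The main obstacle, as I see it, is technical rather than conceptual: for $f\in L^p(G)$ with $2<p\leq\frac{2m}{m-1}$, the slice $f^\lambda$ is not \emph{a priori} a function on $\mathbb{C}^n$ (Hausdorff--Young only reaches $p=2$), so I would regularize $f$---for example by convolving in the central variable with a compactly supported smooth mollifier---run the above argument on the regularized function, and then pass to the limit using the $L^p$ norm. One must also carefully track the Lebesgue-null set of $\lambda$ discarded twice (once for Bessel zeros, once for the slicewise application) to ensure that Fourier inversion in $t$ can still be invoked to recover $f$ itself.
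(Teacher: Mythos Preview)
Your reduction idea is morally right --- the bi-spherical mean differs from $\mu_r$ only by a harmless Bessel factor --- but your formalization rests on a mischaracterization of how Theorem~\ref{firstinjectivity} is proved, and as written it has a genuine gap. The paper's proof of Theorem~\ref{firstinjectivity} is \emph{not} slicewise: it uses the $L^p$-Abel summability of the spectral decomposition (Theorem~\ref{abelsum}) and the $L^p$-boundedness of the projections $\mathcal{A}_k$ (Theorem~\ref{spectraloper}) to show that the \emph{distributional} Fourier transform of $\mathcal{A}_k f(z,\cdot)$ in the $t$-variable is supported in the zero set of $a\mapsto\varphi_k^{|a|}(r)$, a finite union of spheres in $\mathbb{R}^m$; the range $p\le\frac{2m}{m-1}$ enters only through Theorem~\ref{na}. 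The paper's proof of Theorem~\ref{secondinjectivity} is verbatim the same, except that the support is now the zero set of $a\mapsto\frac{J_{m/2-1}(s|a|)}{(s|a|)^{m/2-1}}\,\varphi_k^{|a|}(r)$, a countable union of spheres, to which Theorem~\ref{na} still applies.

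Your slicewise route, by contrast, never invokes Theorem~\ref{na}, and that is precisely the gap: for a fixed $\lambda$, injectivity of $g\mapsto g\times_{|\lambda|}\mu_r$ on (say) $L^2(\mathbb{C}^n)$ has nothing to do with $m$ or with the threshold $\frac{2m}{m-1}$, so if your argument were valid it would yield injectivity of $\mu_{r,s}$ for every $p<\infty$, contradicting the counterexample $\Phi_k^\lambda$ exhibited in the paper. The breakdown is exactly where you flag it: for $p>2$ the slice $f^\lambda$ is not a function, and mollifying only in the central variable does not repair this (it multiplies the distribution $a\mapsto f^a(z)$ by a smooth factor but does not turn it into a function --- for $\Phi_k^\lambda$ the transform in $t$ is a surface measure and remains one after such mollification). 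A correct variant close to your idea \emph{does} work and is genuinely different from the paper's argument: write $f\ast\mu_{r,s}=(f\ast\mu_r)\ast_t\nu_s$, apply the Euclidean one-radius theorem on $\mathbb{R}^m$ (valid exactly for $p\le\frac{2m}{m-1}$) to $(f\ast\mu_r)(z,\cdot)\in L^p(\mathbb{R}^m)$ for a.e.\ $z$ to deduce $f\ast\mu_r\equiv 0$, and then invoke Theorem~\ref{firstinjectivity} directly.
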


We also show by examples that the above two theorems are optimal. Finally, we define the {{\it homogeneous spherical means}}. Let $|(z,t)|$ denote a homogeneous norm on $G$ (see the next section for definition). There exists a unique Radon measure $\sigma$ on the unit sphere $\Sigma=\{(z,t):|(z,t)|=1 \}$ such that for all $f\in L^1(G)$
\[ \int\limits_G f(z, t)\, dz~dt =\int\limits_0^\infty\int\limits_\Sigma f(\delta_r(z,t))\, d\sigma(z,t)\, r^{Q-1}dr\] where $\delta_r$ denote the dilations that act as automorphisms of $G$ (see the next section for the definition). Dilating the measure $\sigma$ using $\delta_r$, for $r>0$ we can define $\sigma_r$  by
\[ \sigma_r(f)=\sigma(\delta_r f)=\int\limits_\Sigma f(\delta_r(z,t))\, d\sigma(z,t).\]
The homogeneous spherical means of a function $f$ is defined as the convolution $f \ast \sigma_r,$ of $f$ with $\sigma_r.$ For the homogeneous spherical means we have the following theorem.
\begin{thm}\label{thirdinjectivity} 
    \begin{enumerate}
        \item Let $m\geq 2$. If $f\in L^p(G),\, 1\leq p\leq \frac{2m}{m-1}$ and $f\ast\sigma_r(z,t)=0$ for all $(z,t)\in G$ then $f$ vanishes identically.
        \item Let $G=\mathbb{H}^n$, that is $m=1$, then the above injectivity holds for the range $1\leq p<\infty$.
    \end{enumerate}
\end{thm}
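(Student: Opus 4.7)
The plan is to proceed in the same spirit as for Theorems \ref{firstinjectivity} and \ref{secondinjectivity}: take the Euclidean Fourier transform in the central variable $t \in \mathfrak{z}$ to convert $f \ast \sigma_r = 0$ into a family of $\lambda$-twisted convolution equations on $\mathbb{C}^n$,
\[
f^{\lambda} \ast_{\lambda} \sigma_r^{\lambda} = 0 \qquad \text{for almost every } \lambda \in \mathbb{R}^m \setminus \{0\},
\]
and then analyse each of them by Laguerre expansion. The only genuinely new ingredient is the description of $\sigma_r^{\lambda}$ and the verification that it has enough non-zero Laguerre coefficients to force $f^{\lambda} \equiv 0$; once that is done the $L^p$-step is exactly the one used in the proof of Theorem \ref{firstinjectivity}.

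First, I would unpack $\sigma_r$. Using a standard Kor\'anyi-type homogeneous norm $|(z,t)| = (|z|^{4} + c|t|^{2})^{1/4}$, the dilations $\delta_r(z,t) = (rz, r^{2}t)$ show that the sphere $\Sigma_r$ is foliated by the product spheres $\{|z| = \rho\} \times \{|t| = \tau_r(\rho)\}$ with $\tau_r(\rho) = c^{-1/2}(r^{4} - \rho^{4})^{1/2}$, $0 \le \rho \le r$. A coarea computation yields a smooth positive Jacobian $j_r(\rho)$ with
\[
\int_G F \, d\sigma_r = \int_0^r j_r(\rho) \Bigl( \int_{|z|=\rho} \int_{|t|=\tau_r(\rho)} F(z,t) \, d\mu_\rho(z)\, d\nu_{\tau_r(\rho)}(t) \Bigr) d\rho ,
\]
so $\sigma_r$ is an integral over $\rho$ of bi-spherical measures. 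Taking the Fourier transform in $t$ collapses $\nu_{\tau_r(\rho)}$ to a scalar and produces the radial function
\[
\sigma_r^{\lambda}(z) = j_r(|z|) \, \widehat{\nu}_{\tau_r(|z|)}(\lambda) \, \mathbf{1}_{\{|z| \le r\}}, \qquad \widehat{\nu}_s(\lambda) = c_m \, \frac{J_{(m-2)/2}(|\lambda| s)}{(|\lambda| s)^{(m-2)/2}}.
\]
Since $\sigma_r^{\lambda}$ is radial on $\mathbb{C}^n$, it acts under $\ast_{\lambda}$ on the $\lambda$-Laguerre functions $\varphi_{k,\lambda}$ by a scalar $R_k(\lambda)$ which, up to a normalising constant, is
\[
R_k(\lambda) = \int_0^r L_k^{n-1}\!\Bigl(\tfrac{|\lambda|}{2}\rho^{2}\Bigr) e^{-\frac{|\lambda|}{4}\rho^{2}} \, \widehat{\nu}_{\tau_r(\rho)}(\lambda) \, j_r(\rho) \, \rho^{2n-1} \, d\rho .
\]

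The main technical obstacle, and the step I expect to take the most work, is to show that for every fixed $\lambda \ne 0$ one has $R_k(\lambda) \ne 0$ for infinitely many $k$. The plan here is by contradiction: assume $R_k(\lambda) = 0$ for every $k \ge 0$; after the substitution $u = \tfrac{|\lambda|}{2}\rho^{2}$ the vanishing reads
\[
\int_0^{|\lambda| r^{2}/2} L_k^{n-1}(u) \, G_{\lambda}(u) \, u^{n-1}\, e^{-u}\, du = 0 \qquad \text{for all } k \ge 0,
\]
with $G_{\lambda}(u) = e^{u/2} \, \widehat{\nu}_{\tau_r(\rho(u))}(\lambda) \, j_r(\rho(u))$ bounded, compactly supported in $u$, and vanishing only on a discrete set (the zeros of the Bessel factor are isolated and $j_r > 0$ on $(0,r)$). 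Completeness of $\{L_k^{n-1}\}_{k \ge 0}$ in $L^{2}((0, \infty), u^{n-1} e^{-u}\, du)$ then forces $G_{\lambda} \equiv 0$, a contradiction.

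With infinitely many $R_k(\lambda) \ne 0$ in hand, $f^{\lambda} \ast_{\lambda} \sigma_r^{\lambda} = 0$ forces the Weyl transform of $f^{\lambda}$ to vanish on an infinite-dimensional spectral subspace of $L^{2}(\mathbb{R}^n)$, which is precisely the input consumed by the $L^p$ rigidity argument of Theorem \ref{firstinjectivity}. Feeding this into the Strichartz-type $L^p$ bound from \cite{MR1101262} (available in the range $1 \le p \le \frac{2m}{m-1}$ when $m \ge 2$, and in the full range $1 \le p < \infty$ when $m = 1$ by Thangavelu's theorem on the Heisenberg group) yields $f^{\lambda} \equiv 0$ for almost every $\lambda$, and hence $f \equiv 0$ on $G$. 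The two parts of the theorem are distinguished solely by this final $L^p$-step: when $m = 1$ the Bessel obstruction in the center disappears, so the full range $p < \infty$ survives.
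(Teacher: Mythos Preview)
Your plan reverses the roles of $k$ and $\lambda$ relative to what is actually needed, and this creates a genuine gap. Even if you grant that for each fixed $\lambda\neq 0$ one has $R_k(\lambda)\neq 0$ for infinitely many $k$ (your contradiction argument, as written, only rules out $R_k(\lambda)=0$ for \emph{all} $k$, but set that aside), the relation $f^{\lambda}\ast_{\lambda}\sigma_r^{\lambda}=0$ then yields $f^{\lambda}\times_{\lambda}\varphi_k^{\lambda}=0$ only for those $k$. The remaining Laguerre components of $f^{\lambda}$ are completely unconstrained, so $f^{\lambda}\equiv 0$ does \emph{not} follow. Your appeal to ``the $L^p$ rigidity argument of Theorem~\ref{firstinjectivity}'' is a misreading: that argument never works at a fixed $\lambda$ nor says anything about vanishing on an infinite-dimensional spectral subspace. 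What it does is fix $k$, show that the distributional Fourier transform in $t$ of $\mathcal{A}_k f(z,\cdot)$ is supported on a thin set (a union of spheres in $\mathbb{R}^m$), and then invoke Theorem~\ref{na} to kill $\mathcal{A}_k f$; the $L^p$ range enters only through Theorem~\ref{na}, not through any Strichartz bound on $\mathbb{C}^n$.

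The paper's proof is dual to yours and avoids this trap. It also writes $\sigma_s$ as an integral of bi-spherical measures and computes the eigenvalue
\[
e_k^a\ast\sigma_s=c_{k,n}\,R_k(|a|)\,e_k^a,\qquad
R_k(\lambda)=2\int_0^1 \frac{J_{\frac{m}{2}-1}(s^2\sqrt{1-r^4}\,\lambda)}{(s^2\sqrt{1-r^4}\,\lambda)^{\frac{m}{2}-1}}\,\varphi_k^{\lambda}(r)\,r^{2n-1}(1-r^4)^{\frac{m-2}{2}}\,dr,
\]
but then \emph{fixes $k$ and varies $\lambda$}. The key observation is that $\lambda\mapsto R_k(\lambda)$ extends holomorphically to $\Re\lambda>0$, so its real zeros are at most countable. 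Hence the distributional $t$-Fourier transform of $\mathcal{A}_k f(z,\cdot)$ is supported in a countable union of spheres $\{|a|=\lambda_j\}$, and Theorem~\ref{na} forces $\mathcal{A}_k f=0$ for $1\le p\le \frac{2m}{m-1}$ (all $p<\infty$ when $m=1$, since spheres in $\mathbb{R}$ are finite sets). Abel summability (Theorem~\ref{abelsum}) and $L^p$-boundedness of $\mathcal{A}_k$ (Theorem~\ref{spectraloper}) then give $f\equiv 0$. To repair your argument you must either switch to this ``fix $k$, vary $\lambda$'' viewpoint, or else prove the much stronger statement that $R_k(\lambda)\neq 0$ for \emph{every} $k$ at a.e.\ $\lambda$, which your Laguerre-completeness idea does not deliver.
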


The plan of the paper is as follows: In the next section we recall all the required definitions and also state some known results that will be used later. In the third section we study the spectral decomposition of the sublaplacian of $G$ and prove the Abel summability. Using this, we prove the injectivity results in the final section.

\section{Preliminaries}
In this section, we recall some definitions and properties of $H$-type groups introduced by Kaplan \cite{MR554324}.
Let $\mathfrak{g}$ be a  finite dimensional real inner product space endowed with a Lie bracket that makes it into a two step nilpotent Lie algebra.  Let $\mathfrak{z}$ be its centre and $\mathfrak{v}$ be the  orthogonal complement of $\mathfrak{z}$. 
For each $v\in \mathfrak{v}$, consider the map $ad_v:\mathfrak{v}\to \mathfrak{z}$ defined by 
\[ad_v(v') = [v, v'].\]
Let $\mathfrak{f}_v$ be the kernel of this map and  $\mathfrak{b}_v $   its orthogonal complement so that 
 \[\mathfrak{v}=\mathfrak{f}_v\oplus \mathfrak{b}_v .\] 
We shall say that $\mathfrak{g}$ is Heisenberg type or $H$-type if 
the map $ad_v$ is a surjective isometry for every unit vector $v\in \mathfrak{v}$. 
A connected and simply connected Lie group $G$ is of Heisenberg type if its Lie algebra is $H$-type.
For each non-zero $z\in \mathfrak{z}$ we can define the linear operator $J_z:\mathfrak{v}\to \mathfrak{v}$ by 
\[\langle J_z(v),v'\rangle=\langle z,[v,v']\rangle\qquad\mbox{ for all } v,v'\in\mathfrak{v}. \]
Then $J_z$ is a skew-symmetric linear isomorphism. Then $\mathfrak{g}$ is $H$-type if and only if 
\[J_z^2=-|z|^2 I.\]
This means that $J_z$ defines a complex structure on $\mathfrak{v}$ when $|z| = 1$ and therefore the dimension of $\mathfrak{v}$ is even. Hence, we identify  $\mathfrak{v}$ with $\mathbb{C}^n \equiv \mathbb{R}^{2n}$ and $\mathfrak{z}$ with $ \mathbb{R}^{m}$ for $n, m \in \mathbb{N}$.

The exponential map from $\mathfrak{g}$ to $G$ is a diffeomorphism. 
We can therefore  parametrise the elements of $G=\exp \mathfrak{g}$ by $(z,t)$, for $z$ in $\mathfrak{v} \equiv \mathbb{C}^n$ and $t$ in $\mathfrak{z} \equiv \mathbb{R}^m$. 
By the Baker-Campbell-Hausdorff formula, it follows that the group law in $G$ is \[(z,t)(z',t')=(z+z', t+t'+\frac{1}{2}[z,z'])\quad \forall (z,t),(z',t')\in G. \]
Since $[\mathfrak{v}, \mathfrak{v}] \subset \mathfrak{z}$, the Lie bracket on $\mathfrak{v}$ can be written as (see \cite{MR2027639})  
\[[z,z']_j = \langle z,U^jz' \rangle\]
in terms of $2n\times2n$ skew-symmetric matrices  $U^j, j=1,2, \dots, m$. Since $J_z^2=-|z|^2 I$,   $U^j$ are orthogonal and satisfy
\[U^iU^j+U^jU^i=0,\; i \neq j.\]

The left invariant vector fields on $G$ which agree respectively with $\frac{\partial}{\partial x_j},\frac{\partial}{\partial y_j}$ at the origin are given by \[\begin{split}
    X_j&=\frac{\partial}{\partial x_j}+\frac{1}{2}\sum\limits_{k=1}^m\left(\sum\limits_{l=1}^{2n}z_lU_{l,j}^k \right)\frac{\partial}{\partial t_k},\\  Y_j& =\frac{\partial}{\partial y_j}+\frac{1}{2}\sum\limits_{k=1}^m\left(\sum\limits_{l=1}^{2n}z_lU_{l,j+n}^k \right)\frac{\partial}{\partial t_k},
\end{split} \] 
		where $z_l=x_l,z_{l+n}=y_l , l=1, 2,\ldots,n$.
	 The vector fields $T_k=\frac{\partial}{\partial t_k}, k=1,2,\ldots,m$ correspond to the centre of $\mathfrak{g}$.
Then the sublaplacian $\mathcal{L} = -\sum_j (X_j^2+Y_j^2) $ is given by \[\mathcal{L}=-\sum\limits_{j=1}^n(X_j^2+Y_j^2)=-\Delta_z+\frac{1}{4}|z|^2T-\sum\limits_{k=1}^m \langle z,U^k\nabla_z \rangle T_k, \] where \[\Delta_z=\sum\limits_{j=1}^{2n}\frac{\partial^2}{\partial z_j^2},\quad T=-\sum\limits_{k=1}^{m}\frac{\partial^2}{\partial t_k^2},\quad \nabla_z=\left(\frac{\partial}{\partial z_1},\frac{\partial}{\partial z_2},\cdots,\frac{\partial}{\partial z_{2n}} \right)^T. \]

For $a \in \mathbb R^m$ (identified with  $\mathfrak{z}^\ast$) let $f^a(z)$ stand for the inverse Fourier transform of the function $f(z, t)$ in the central variable. That is
\[ f^a(z) = \int_{\mathbb R^m}f(z, t)~e^{i \langle a, t \rangle}~dt.\]
For $a \neq 0$, let  $B(a)$ be the linear mapping on $\mathfrak{z}^{\perp}$ defined by
$$
\langle B(a) u, w\rangle=a([u, w]), \quad \text { for any } u, w \in \mathfrak{z}^{\perp}.
$$
Choose an orthonormal basis

\begin{equation}\label{onb1}
\left\{E_{1}(a), E_{2}(a), \cdots, E_{n}(a), \bar{E}_{1}(a), \bar{E}_{2}(a), \cdots, \bar{E}_{n}(a)\right\}
\end{equation}
of $\mathfrak{z}^{\perp}$ such that
$$
B(a) E_{i}(a)=-|a| \bar{E}_{i}(a), B(a) \bar{E}_{i}(a)=|a| E_{i}(a)
$$
and an orthonormal basis 
\begin{equation}\label{onb2}
\{ \epsilon_1, \epsilon_2, \cdots, \epsilon_m \}
\end{equation}
for $\mathfrak{z},$ such that $\langle a, \epsilon_1 \rangle = |a|$ and $\langle a, \epsilon_j \rangle = 0$ for $j =2,3, \cdots, m$. If $\mathfrak{g}$ is identified with $\mathbb{C}^n \times \mathbb{R}^m$ via this orthonormal basis, the first coordinate of the Lie bracket takes the form (see \cite{MR2801610})   
\[[z, z']_1 = \langle z,U^1z'\rangle =\sum\limits_{i=0}^n(x_i'y_i-y_i'x_i) = \Im(z \cdot \bar{z'}).\]
Hence the convolution with functions of the form  $g(z, t) = e^{-i\langle a,t \rangle}\varphi(z)$ can be written as
\begin{equation}\label{twisted}
    \begin{split}
f\ast g (z, t) &= \int_{\mathbb{C}^n}\int_{\mathbb{R}^m} f(z-w, t-s-\frac{1}{2}[z,w]) \varphi(w)e^{-i\langle a, s \rangle}\; dw ds\\
&=\int_{\mathbb{C}^n} f^a(z-w)\varphi(w) e^{-i\langle a,t \rangle}e^{\frac{i}{2}\langle a,[z,w] \rangle} \;dw\\
 &= e^{-i\langle a,t \rangle} f^a \times_{|a|} \varphi(z), 
 \end{split}
\end{equation} where the twisted convolution $\times_{|a|}$ of two suitable functions $f_1$ and $f_2$  on $\mathbb{C}^n$ is defined by 
\[ f_1 \times_{|a|} f_2 (z) = \int_{\mathbb C^n}~f_1(z-w)~f_2(w)~e^{\frac{i}{2}
|a| \Im\, z \cdot \overline{w}}~dw.\]
Also, one obtains the following result regarding the  action of the sublaplacian $\mathcal{L}$ on functions of the form $e^{-i \langle a, t \rangle} \varphi(z)$. 
\begin{lemma}
 Let $0 \neq a \in \mathfrak{z}^{*}$. If $f(z, t)=e^{-i\langle a, t\rangle} \varphi(z)$, then
\[
\mathcal{L} f(z, t)=e^{-i(a, t\rangle} L_{|a|} \varphi(z)
\]
where, for $\lambda > 0$ \[L_\lambda=-\Delta_z+\frac{\lambda^2|z|^2}{4}-i\lambda\sum\limits_{j=1}^{n}\left(x_j\frac{\partial}{\partial y_j}-y_j\frac{\partial}{\partial x_j}\right)\] 
is the twisted Laplacian on $\mathbb{C}^n$.
\end{lemma}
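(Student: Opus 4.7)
The plan is a direct computation, exploiting the fact that the chosen orthonormal basis on $\mathfrak{z}$ aligns $a$ with the first basis vector $\epsilon_1$, which collapses the sum over $k$ in the formula for $\mathcal{L}$ to a single term.

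First I would apply the explicit expression
\[
\mathcal{L}=-\Delta_z+\tfrac{1}{4}|z|^2 T-\sum_{k=1}^{m}\langle z,U^{k}\nabla_z\rangle T_k
\]
to $f(z,t)=e^{-i\langle a,t\rangle}\varphi(z)$, piece by piece. Since the horizontal Laplacian acts only on $z$, it yields $e^{-i\langle a,t\rangle}(-\Delta_z\varphi)$. For the central operators, using $\partial_{t_k}e^{-i\langle a,t\rangle}=-ia_k\,e^{-i\langle a,t\rangle}$ gives
\[
T f=|a|^{2}e^{-i\langle a,t\rangle}\varphi(z),\qquad T_k f=-ia_k\,e^{-i\langle a,t\rangle}\varphi(z).
\]
Combining these,
\[
\mathcal{L}f(z,t)=e^{-i\langle a,t\rangle}\Big[-\Delta_z\varphi+\tfrac{|a|^{2}|z|^{2}}{4}\varphi+i\sum_{k=1}^{m}a_k\,\langle z,U^{k}\nabla_z\varphi\rangle\Big].
\]

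Next I would invoke the special choice of orthonormal basis \eqref{onb2} on $\mathfrak{z}$: by construction $\langle a,\epsilon_1\rangle=|a|$ and $\langle a,\epsilon_j\rangle=0$ for $j\geq 2$, so $a_1=|a|$ and $a_k=0$ for $k\geq 2$. Only the $k=1$ term in the sum survives, giving
\[
\mathcal{L}f(z,t)=e^{-i\langle a,t\rangle}\Big[-\Delta_z\varphi+\tfrac{|a|^{2}|z|^{2}}{4}\varphi+i|a|\,\langle z,U^{1}\nabla_z\varphi\rangle\Big].
\]

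Finally I would identify the first-order term with the drift appearing in $L_{|a|}$. Using the formula $[z,z']_{1}=\langle z,U^{1}z'\rangle=\sum_{j=1}^{n}(x'_{j}y_{j}-y'_{j}x_{j})$ recorded just before \eqref{twisted}, and plugging in $z'=\nabla_z\varphi$,
\[
\langle z,U^{1}\nabla_z\varphi\rangle=\sum_{j=1}^{n}\big(y_{j}\partial_{x_{j}}\varphi-x_{j}\partial_{y_{j}}\varphi\big)=-\sum_{j=1}^{n}\big(x_{j}\partial_{y_{j}}\varphi-y_{j}\partial_{x_{j}}\varphi\big).
\]
Substituting, the bracket becomes precisely $L_{|a|}\varphi(z)$, which completes the proof.

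The computation is routine; the only substantive point is keeping track of signs and recognizing that the basis-adapted form of $[\,\cdot\,,\,\cdot\,]_{1}$ is exactly what is needed to convert the drift term $\langle z,U^{1}\nabla_z\rangle$ into the rotation generator appearing in the twisted Laplacian. No delicate estimates or additional machinery are required.
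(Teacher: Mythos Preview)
Your computation is correct: you apply each term of the sublaplacian to $e^{-i\langle a,t\rangle}\varphi(z)$, use the adapted basis on $\mathfrak{z}$ to collapse the $k$-sum, and then identify $\langle z,U^{1}\nabla_z\varphi\rangle$ with the rotation generator via the explicit formula for $[z,z']_1$. All signs check out.

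The paper itself does not give a proof of this lemma; it simply refers the reader to Lemma~1 of \cite{MR2801610}. Your self-contained computation is therefore more than what the paper provides, and is exactly the kind of routine verification one would expect in the cited reference.
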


For a proof, see Lemma 1 in \cite{MR2801610}. 
An $H$-type group admits a family of dilations which act as automorphisms of $G$ by
\[\delta_r(z, t) = (rz, r^2t), r> 0.\]
It is easy to see that $G$ with this family of dilations is a homogeneous Lie group whose  homogeneous dimension  is  $2n+2m$ which we denote by $Q$ (see \cite{MR657581}). The Kor\'{a}nyi norm on $G$ is defined as 
\[|(z,t)| = \left(|z|^4+|t|^2\right)^{1/4}.\]
It is clear that $|\delta_r(z,t) = r |(z,t)|$.

A smooth kernel $K$ on $G \setminus \{0\}$ is said to be homogeneous of degree $-Q$ if $$K( \delta_r(z,t) ) = r^{-Q} K(z,t), \forall (z,t) \in G \setminus \{0\}.$$ 
Homogeneous kernels which satisfy a cancellation condition (see below) defines singular integral operators on $G$. The cancellation condition is given by 

\begin{equation}\label{meanzero}
\int\limits_{a < |(z,t)| <b} K(z,t)\; dzdt = 0,  \forall \;   0 < a < b < \infty.  
\end{equation}
Notice that, since $\{ (z, t) : a < |(z,t)| < b\}$ is relatively compact, the above integral is well defined. For more details on such operators, we refer to \cite{MR657581}. Now we collect some of the results about singular integral operators on $G$ which will be used later.
\begin{thm}\label{cancel}
    Let $G$ be a connected, simply connected $H$-type group. Let $K \in C^{\infty}(G \setminus \{0\})$ be a kernel which is homogeneous of degree $-Q$. Assume that $K$ satisfies the cancellation condition \[\int\limits_{a < |(z,t)| <b} K(z,t)\; dzdt = 0,  \forall \;   0 < a < b < \infty.  \]
    Then the singular integral operator 
    \[f \mapsto f\ast K\]
    is bounded on $L^2(G)$.
\end{thm}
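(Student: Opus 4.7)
The plan is to prove Theorem \ref{cancel} by the Cotlar--Stein almost-orthogonality lemma applied to a smooth dyadic decomposition of $K$ adapted to the dilations $\delta_r$. Fix a radial $\psi \in C_c^\infty((1/2, 2))$ with $\sum_{j \in \mathbb{Z}} \psi(2^{-j} r) = 1$ for all $r > 0$, and set $K^{(j)}(z,t) := \psi(2^{-j} |(z,t)|)\, K(z,t)$. The homogeneity of $K$ gives $K^{(j)}(z,t) = 2^{-jQ} K^{(0)}(\delta_{2^{-j}}(z,t))$, so each $K^{(j)}$ is smooth with compact support in $\{2^{j-1} \le |(z,t)| \le 2^{j+1}\}$, and a dilation change of variables yields $\|K^{(j)}\|_{L^1(G)} = \|K^{(0)}\|_{L^1(G)}$, independent of $j$. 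In particular $T_j f := f \ast K^{(j)}$ satisfies $\|T_j\|_{\mathrm{op}} \le \|K^{(0)}\|_{L^1(G)}$ by Young's inequality.

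The hypothesis $\int_{a < |(z,t)| < b} K = 0$ for all $0 < a < b$, combined with the polar-type decomposition $dz\,dt = r^{Q-1}\,dr\,d\sigma$ and the homogeneity $K(\delta_r(z,t)) = r^{-Q} K(z,t)$, is equivalent to the single spherical cancellation identity $\int_\Sigma K\, d\sigma = 0$. The same computation applied to $K^{(j)}$ with its radial cutoff shows that
\[ \int_G K^{(j)}(z,t)\, dz\, dt \;=\; \left(\int_\Sigma K\, d\sigma\right) \int_0^\infty \psi(2^{-j}r) \frac{dr}{r} \;=\; 0, \]
and the same holds for $\tilde K^{(j)}$, where $\tilde K(x) := \overline{K(x^{-1})}$ inherits the hypotheses of $K$.

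For Cotlar--Stein it suffices to bound $\|T_i T_j^\ast\|_{\mathrm{op}}$ and $\|T_i^\ast T_j\|_{\mathrm{op}}$, which in turn are controlled by $\|K^{(i)} \ast \tilde K^{(j)}\|_{L^1}$ and $\|\tilde K^{(i)} \ast K^{(j)}\|_{L^1}$. By dilation covariance of convolution on $G$ we may assume $i = 0$ and $j = n > 0$. The mean-zero property of $\tilde K^{(0)}$ allows us to insert a subtraction:
\[ \tilde K^{(0)} \ast K^{(n)}(x) \;=\; \int_G \tilde K^{(0)}(y) \bigl[ K^{(n)}(y^{-1} x) - K^{(n)}(x) \bigr]\, dy. \]
Since $K^{(n)}$ is supported at homogeneous scale $2^n$ while $\tilde K^{(0)}$ is supported at scale $1$, and since $|X K^{(n)}| \lesssim 2^{-n(Q+1)}$ for any left-invariant vector field $X$ of homogeneous degree $1$, a homogeneous mean-value argument gives $|\tilde K^{(0)} \ast K^{(n)}(x)| \lesssim 2^{-n(Q+1)}$ on its support, a set of volume $\lesssim 2^{nQ}$. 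Integrating yields $\|\tilde K^{(0)} \ast K^{(n)}\|_{L^1} \lesssim 2^{-n}$, and the case $n < 0$ is symmetric. The Cotlar--Stein lemma then bounds $\|\sum_{|j| \le N} T_j\|_{\mathrm{op}}$ uniformly in $N$, and a standard weak-$\ast$ limit produces an $L^2$-bounded operator $f \mapsto f \ast K$ realising the principal value.

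The main obstacle will be the mean-value estimate $|K^{(n)}(y^{-1}x) - K^{(n)}(x)| \lesssim 2^{-n(Q+1)}$: the natural derivatives on $G$ are the left-invariant vector fields rather than Euclidean partials, so one must carefully couple the Baker--Campbell--Hausdorff description of $y^{-1}x$ (which shows $y^{-1}x$ lies at Kor\'{a}nyi distance $\lesssim 1$ from $x$ when $|y| \lesssim 1$) with the scaling bounds on $K^{(n)}$. This is the one place where the non-commutative group structure genuinely enters, and it forms the technical heart of the proof.
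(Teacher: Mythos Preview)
Your Cotlar--Stein argument with a smooth dyadic decomposition is correct and is the standard route to this result on homogeneous (in particular, stratified or $H$-type) groups: the pieces $K^{(j)}$ have uniformly bounded $L^1$ norm by dilation, the spherical cancellation $\int_\Sigma K\,d\sigma=0$ (equivalent to the annular cancellation hypothesis) forces $\int_G K^{(j)}=0$, and the homogeneous mean-value inequality then gives the $2^{-|i-j|}$ almost-orthogonality bound needed for Cotlar--Stein.

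The paper, however, does not give a proof at all: its entire proof of this theorem is the sentence ``This is a special case of Theorem~1 in \cite{MR582703}.'' So you are supplying a self-contained argument where the paper simply defers to the literature; what you have sketched is essentially the standard proof of the cited theorem, so there is no conflict of approach to discuss.

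One small technical point worth tightening, since you flagged it yourself: the increment $K^{(n)}(y^{-1}x)-K^{(n)}(x)$ is a \emph{left} translation of the argument, so the relevant mean-value inequality is the one controlled by \emph{right}-invariant vector fields (which generate left translations), not left-invariant ones. This does not affect the estimate---dilations on $G$ are group automorphisms, so right-invariant fields of homogeneous degree $d$ scale exactly as left-invariant ones do, giving $|\tilde X K^{(n)}|\lesssim 2^{-n(Q+d)}$---but it is worth getting the side right in the place where non-commutativity genuinely enters.
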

\begin{proof}
    This is a special case of Theorem 1 in \cite{MR582703}*{p. 494}.
\end{proof}

The next theorem says that for the above operators, the $L^2$-boundedness imply the $L^p$-boundedness.
\begin{thm}\label{CZp}
       Let $G$ be an $H$-type group and $K \in C^{\infty}(G \setminus \{0\})$ be a kernel that satisfy the cancellation condition and  is homogeneous of degree $-Q$. If the operator $f \mapsto f\ast K$ is bounded on $L^2(G)$, then it is bounded on $L^p(G)$ for $1 < p< \infty$.
\end{thm}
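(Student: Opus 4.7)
The plan is to follow the classical Calderón-Zygmund program on spaces of homogeneous type, adapted to the non-commutative setting of $G$. The quasi-metric $d((z,t),(w,s)) = |(w,s)^{-1}(z,t)|$ together with Lebesgue measure makes $G$ a space of homogeneous type, since $|B(x,\lambda r)| = \lambda^Q |B(x,r)|$ for $\lambda, r > 0$, so the doubling condition holds. Combined with the hypothesis of $L^2$-boundedness, the strategy is to prove weak type $(1,1)$ via a Calderón--Zygmund decomposition, interpolate to obtain $L^p$ for $1 < p \leq 2$, and pass to the remaining range by duality.

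First I would verify that $K$ satisfies a Hörmander-type regularity condition of the form
\[
\int_{|(z,t)| \geq c\, |(w,s)|} \bigl| K((w,s)^{-1}(z,t)) - K(z,t)\bigr|\, dz\, dt \leq C,
\]
uniformly in $(w,s) \neq 0$ for some $c > 1$. Since $K$ is smooth on $G \setminus \{0\}$ and homogeneous of degree $-Q$, the left-invariant derivatives $X_j K$, $Y_j K$, $T_k K$ are smooth and homogeneous of degrees $-Q-1$, $-Q-1$, $-Q-2$. Using the mean value inequality on the homogeneous group $G$, one bounds the integrand pointwise by $C |(w,s)|^{\alpha} |(z,t)|^{-Q-\alpha}$ for some $\alpha > 0$ when $|(z,t)|$ is sufficiently large compared with $|(w,s)|$; integrating in homogeneous coordinates gives the required estimate.

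Next, given $f \in L^1(G)$ and $\alpha > 0$, I would apply the Calderón--Zygmund decomposition at height $\alpha$ adapted to the quasi-metric of $G$ to obtain $f = g + b$ with $\|g\|_{\infty} \leq C \alpha$, $\|g\|_1 \leq \|f\|_1$, and $b = \sum_j b_j$ where each $b_j$ is supported in a ball $B_j$, has mean zero, satisfies $\|b_j\|_1 \leq C \alpha |B_j|$, and $\sum_j |B_j| \leq C \|f\|_1/\alpha$. The $L^2$-bound handles the good part: Chebyshev's inequality together with $\|g\|_2^2 \leq C \alpha \|g\|_1$ yields $|\{|g \ast K| > \alpha/2\}| \leq C \|f\|_1/\alpha$. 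For the bad part, using $\int b_j = 0$ and the Hörmander condition one estimates $\int_{G \setminus B_j^{\ast}} |b_j \ast K|$ with $B_j^{\ast}$ a suitably dilated companion of $B_j$; summing over $j$ and combining with $\sum_j |B_j^{\ast}| \leq C \|f\|_1/\alpha$ yields the weak $(1,1)$ inequality
\[
\bigl|\{(z,t) \in G : |f \ast K(z,t)| > \alpha\}\bigr| \leq \frac{C}{\alpha} \|f\|_1.
\]
Marcinkiewicz interpolation between this weak $(1,1)$ bound and the hypothesized $L^2$-bound gives boundedness on $L^p$ for $1 < p \leq 2$.

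Finally, to cover the range $2 \leq p < \infty$ I would use duality. The $L^2$-adjoint of $f \mapsto f \ast K$ is convolution with the kernel $K^{\ast}(z,t) = \overline{K((z,t)^{-1})}$, which is again smooth on $G \setminus \{0\}$, homogeneous of degree $-Q$, and satisfies the cancellation condition, since the inversion map is a measure-preserving bijection of each homogeneous annulus. Hence $L^2$-boundedness passes to the adjoint, and the preceding argument shows the adjoint is bounded on $L^{p'}$ for $1 < p' \leq 2$; dualizing gives boundedness of the original operator on $L^p$ for $2 \leq p < \infty$. The main technical point is the verification of the Hörmander condition: once one has good control on the left-invariant derivatives of $K$ and the corresponding mean value inequality on $G$, the remainder is the standard Calderón--Zygmund machinery on spaces of homogeneous type, as developed for general homogeneous Lie groups in \cite{MR657581}.
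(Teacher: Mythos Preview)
Your argument is correct: you have spelled out the standard Calder\'on--Zygmund program on homogeneous groups (H\"ormander condition from smoothness and homogeneity, weak $(1,1)$ via the decomposition, interpolation, duality for the upper range), and this is precisely the content of the result the paper invokes. The paper itself does not give a proof at all---it simply cites Theorem~5.1 of \cite{MR463513}, which is exactly the general $L^p$-boundedness theorem for Calder\'on--Zygmund operators on homogeneous groups that your sketch reconstructs.
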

\begin{proof}
    Follows from Theorem 5.1 of \cite{MR463513}.
\end{proof}
We end this section by restating the cancellation condition. 
\begin{lemma}\label{cancellemma}
    Let  $K \in C^{\infty}(G \setminus \{0\})$ be homogeneous of degree $-Q$. Then the cancellation condition in \eqref{meanzero} is equivalent to the condition
    \[\int_{\mathbb{C}^n}\int_{S^{m-1}} K(z, u) \; dz d\nu(u) = 0\]
    where $\nu$ is the normalised surface measure on the unit sphere in $\mathfrak{z}.$ In particular, if $K$ is radial in the $t$-variable, the cancellation condition is equivalent to 
    \[\int_{\mathbb{C}^n} K(z, 1)\; dz = 0.\]
\end{lemma}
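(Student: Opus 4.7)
The plan is to evaluate the annular integral $I(a,b) := \int_{a < |(z,t)| < b} K(z,t)\, dz\, dt$ in two different ways and then compare. Since $K$ is homogeneous of degree $-Q$, each evaluation will factor out the same $\ln(b/a)$ prefactor, reducing the vanishing of $I(a,b)$ for every $0 < a < b$ to the vanishing of a single ``mean'' of $K$. The first evaluation uses the Kor\'anyi polar decomposition $dz\, dt = r^{Q-1}\, d\sigma\, dr$ recalled in the introduction: substituting $(z,t) = \delta_r(z_0, t_0)$ and applying $K(\delta_r(z_0,t_0)) = r^{-Q} K(z_0,t_0)$ collapses $I(a, b)$ into $\ln(b/a) \int_\Sigma K\, d\sigma$. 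Hence the cancellation condition \eqref{meanzero} is equivalent to $\int_\Sigma K\, d\sigma = 0$.

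For the second evaluation, I will write $t = su$ with $s > 0$ and $u \in S^{m-1}$, so that $dt = c_m\, s^{m-1}\, ds\, d\nu(u)$ for a normalization constant $c_m$, and then, for fixed $u$, rescale the horizontal variable by $z = s^{1/2} z'$. Under this substitution the Kor\'anyi annulus $\{a^4 < |z|^4 + s^2 < b^4\}$ becomes a region where, for each $z'$, the variable $s$ ranges over an interval whose endpoints have the fixed ratio $(b/a)^2$. The Jacobian of the rescaling contributes $s^n$, while homogeneity gives $K(s^{1/2} z', su) = s^{-(n+m)} K(z', u)$, so the $s$-integrand collapses to $s^{-1}$ and the inner $s$-integral produces the factor $2\ln(b/a)$, independent of $z'$ and $u$. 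Performing the $s$-integration first thus writes $I(a,b)$ as a positive constant times $\ln(b/a) \int_{\mathbb{C}^n}\int_{S^{m-1}} K(z, u)\, dz\, d\nu(u)$, and comparison with the first evaluation yields a proportionality of the form $\int_\Sigma K\, d\sigma = C \int_{\mathbb{C}^n}\int_{S^{m-1}} K(z, u)\, dz\, d\nu(u)$ with $C > 0$. The stated equivalence follows at once.

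The ``in particular'' statement is then immediate: if $K$ is radial in $t$, then $K(z, u)$ is independent of $u \in S^{m-1}$, so the $\nu$-average reduces to $K(z, 1)$ (using that $\nu$ is normalized), giving the condition $\int_{\mathbb{C}^n} K(z,1)\, dz = 0$. I do not anticipate any real obstacle: the annulus $\{a \le |(z,t)| \le b\}$ is relatively compact in $G \setminus \{0\}$ and $K$ is continuous there, so $|K|$ is bounded on it and Fubini justifies every interchange, including in the version for $|K|$ (which a posteriori guarantees that the iterated integral over $\mathbb{C}^n \times S^{m-1}$ is absolutely convergent). The only thing that requires attention is the bookkeeping of the exponents $n$, $m$, and $Q/2 = n + m$ under the rescaling $z = s^{1/2} z'$ and the homogeneity of $K$; this is what makes the $s$-integrand reduce exactly to $s^{-1}$ and produces the clean logarithmic factor in both computations.
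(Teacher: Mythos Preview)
Your proposal is correct and the core computation---writing $t = su$, rescaling $z = s^{1/2} z'$, and using homogeneity to reduce the $s$-integrand to $s^{-1}$ so that the inner integral becomes $2\ln(b/a)$ independently of $z'$---is exactly the paper's argument. The paper does only your ``second evaluation'' and reads off the equivalence directly from $I(a,b) = \log(b^2/a^2)\int_{\mathbb{C}^n}\int_{S^{m-1}} K(w,u)\,dw\,d\nu(u)$; your additional first step via the Kor\'anyi polar decomposition (yielding $I(a,b)=\ln(b/a)\int_\Sigma K\,d\sigma$) is a pleasant but unnecessary detour, since the second evaluation alone already establishes the equivalence with \eqref{meanzero}.
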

\begin{proof}
    
    When $K$ is homogeneous of degree $-Q$
    \begin{align*}
        \int\limits_{a < |(z,t)| <b} K(z,t)\; dzdt &= \int\limits_{\mathbb{C}^n} \int\limits_{S^{m-1}}\int\limits_{ a^4 < |z|^4 + s^2 < b^4} K(z, su)  s^{m-1} ds d\nu(u) dz\\
        &= \int\limits_{\mathbb{C}^n} \int\limits_{S^{m-1}}\int\limits_{ a^4 < |z|^4 + s^2 < b^4} K\left(\frac{z}{\sqrt{s}}, u\right)  s^{-n-1} ds d\nu(u) dz\\
        &= \int\limits_{\mathbb{C}^n} \int\limits_{S^{m-1}}\int\limits_{ a^4 < s^2(1+|w|^4) < b^4}  \frac{ds}{s} K(w, u)  d\nu(u) dz.
    \end{align*}
    Now the result follows from the fact that 
    \[\int\limits_{ a^4 < s^2(1+|w|^4) < b^4}  \frac{ds}{s} = \int_{\frac{a^2}{\sqrt{1+|w|^4}}}^{\frac{b^2}{\sqrt{1+|w|^4}}}\frac{ds}{s}  = \log(\frac{b^2}{a^2})\]
    is independent of $w$.
\end{proof}
    
We need the following result which is a special case of a result due to Christ (see \cite{MR819558}*{p. 575}.

\begin{thm}\label{christ}
Let $G$ be an $H$-type group, with dilations $\{\delta_t:~ t> 0\}.$ Let $\gamma: \mathbb R \to G$ be an odd homogeneous curve, that is $\gamma(t) =\exp(\delta_t(Y_+))$ for $t> 0$ and $\gamma(t) = \exp(\delta_{-t}(Y_-))$ where $Y_+ = - Y_- \in \mathfrak{g},$ so that $\gamma(t) = -\gamma(-t) = \gamma(t^{-1}).$ Then, the operator $$H_\gamma f(x) = \operatorname{p.\!v.} \int_{\mathbb R}~f(x \cdot \gamma(t)^{-1})~\frac{dt}{t},$$ is bounded on $L^p(G)$ for $1 < p < \infty$ with norm independent of the curve $\gamma.$
 
 \end{thm}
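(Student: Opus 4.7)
The plan is to realize $H_\gamma$ as a convolution operator on $G$ and exploit the homogeneity that $\gamma$ inherits from the dilations $\delta_r$. Writing $H_\gamma f = f \ast K_\gamma$ with
\[ K_\gamma = \operatorname{p.\!v.}\int_{\mathbb{R}} \delta_{\gamma(t)^{-1}}\, \frac{dt}{t}, \]
the oddness of $\gamma$ together with $\gamma(t)^{-1}=\gamma(-t)$ endows $K_\gamma$ with the symmetry of a Calder\'on--Zygmund kernel adapted to the group structure, while the $\delta_r$-homogeneity of $\gamma$ translates to dilation invariance of $K_\gamma$ in the distributional sense (homogeneous of degree $-Q$ concentrated on a one-dimensional set).

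For the $L^2$ bound I would apply the Plancherel theorem on $G$, which reduces the claim to showing that the operator $\pi_a(K_\gamma)$ is uniformly bounded on $L^2(\mathbb{R}^n)$ as $a$ ranges over $\mathfrak{z}^\ast\setminus\{0\}$, where $\pi_a$ is the Schr\"odinger-type representation with central character $a$. Dilation invariance restricts matters to $|a|=1$, on which $\pi_a(K_\gamma)$ becomes a principal-value oscillatory integral whose phase is a polynomial in $t$ produced by the Baker--Campbell--Hausdorff formula along $\gamma$. A standard van der Corput / $TT^\ast$ estimate then yields a uniform operator bound depending only on the Kor\'anyi length of $Y_+$, which gives the sought uniformity in the curve. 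To upgrade to $L^p$ for $1<p<\infty$, one views $(G,|\cdot|,dz\,dt)$ as a space of homogeneous type and verifies, after the usual truncation at scales $\varepsilon<|t|<\varepsilon^{-1}$, that $K_\gamma$ satisfies a H\"ormander-type regularity estimate with constants depending only on the homogeneity data; the Calder\'on--Zygmund machinery then produces weak-type $(1,1)$, and interpolation with $L^2$ together with duality gives all $L^p$ estimates.

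The principal obstacle is the uniform $L^2$ estimate on the Schr\"odinger side. The oscillatory phase attached to $\pi_a(\exp\delta_t Y_+)$ depends on the precise decomposition $Y_+=(v_+,z_+)\in \mathfrak{v}\oplus\mathfrak{z}$, and extracting cancellation at all scales simultaneously requires a careful rescaling. The device I would import from Christ is to reduce, by a linear change of variables in $t$, to a normalized phase whose oscillation is controlled purely by $|Y_+|$, leaving an oscillatory integral estimate that is independent of $\gamma$. With uniformity in $\gamma$ secured on the representation side, the transfer back to $G$ via Plancherel and the subsequent Calder\'on--Zygmund argument are essentially routine.
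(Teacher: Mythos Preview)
The paper does not prove this theorem at all: it is quoted verbatim as ``a special case of a result due to Christ'' with a page reference, and is used purely as a black box in the proof of Lemma~\ref{lm3}. So there is no ``paper's own proof'' against which to compare your proposal; you are attempting to supply an argument where the authors simply invoke the literature.

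As for the sketch itself, the overall architecture (Plancherel to reduce to a uniform operator bound for $\pi_a(K_\gamma)$, an oscillatory-integral estimate on the representation side, then Calder\'on--Zygmund theory to pass to $L^p$) is the right shape for results of this type and is broadly in the spirit of Christ's work. Two caveats are worth flagging. First, $K_\gamma$ is a singular measure supported on a one-dimensional curve, not a smooth homogeneous function on $G\setminus\{0\}$, so the Calder\'on--Zygmund step does not follow from the kernel theorems quoted earlier in the paper (Theorems~\ref{cancel} and~\ref{CZp}); one genuinely needs a version of the H\"ormander condition adapted to measures along curves, or rather a Littlewood--Paley decomposition in the $t$-parameter together with almost-orthogonality, which is closer to what Christ actually does. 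Second, the uniformity in $\gamma$ is the entire content of the theorem for the application in Lemma~\ref{lm3}, and your remark that a linear change of variables in $t$ normalizes the phase is only part of the story: one also has to ensure that the constants coming from the weak-type $(1,1)$ bound do not depend on $Y_+$, which requires tracking the geometry of $\gamma$ through the maximal-function and square-function estimates, not just through the $L^2$ piece.
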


We shall also need the following result connecting the $L^p$ membership of a function on $\mathbb R^m$ with the dimension of the support of the Fourier transform of the function.

\begin{thm}\label{na}
 Let $f \in L^p(\mathbb R^m)$ and support of $\widehat{f}$ (distributional Fourier transform of $f$) is contained in a $C^1$-manifold of dimension $0< d < m.$ Then $f$ vanishes identically provided $1 \leq p \leq \frac{2m}{d}.$ If $d = 0,$ $f$ vanishes identically provided $1 \leq p < \infty.$ 

\end{thm}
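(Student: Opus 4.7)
The plan is to handle the cases by the value of $p$ and $d$: reduce to compactly-supported Fourier transform, then treat $1\le p\le 2$ by Hausdorff--Young, $d=0$ by classifying distributions supported at points, and finally the main range $2< p\le 2m/d$ by a localization-plus-scaling argument.

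\emph{Reduction.} Take a smooth partition of unity $\{\chi_j\}\subset C_c^\infty(\mathbb R^m)$ and set $f_j=f\ast\check{\chi}_j$. Young's inequality gives $f_j\in L^p(\mathbb R^m)$, while $\widehat{f_j}=\chi_j\widehat{f}$ is compactly supported in $M$ and $\sum_j f_j=f$ in $\mathcal{S}'(\mathbb R^m)$. So I may assume $\widehat{f}$ is compactly supported in $M$; then $f$ is entire of exponential type by Paley--Wiener--Schwartz, in particular smooth.

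\emph{Easy cases.} For $1\le p\le 2$, Hausdorff--Young gives $\widehat{f}\in L^{p'}(\mathbb R^m)$. Since $M$ is a $C^1$ submanifold of dimension $d<m$, it has Lebesgue measure zero, so $\widehat{f}=0$ a.e., hence as a distribution, and $f\equiv 0$. For $d=0$ with $1\le p<\infty$, $\mathrm{supp}(\widehat{f})$ is locally finite, and a distribution supported at a single point is a finite combination of derivatives of a Dirac mass; inverting, $f$ is a polynomial-exponential $\sum_k P_k(x)\,e^{2\pi i x\cdot\xi_k}$, which cannot lie in $L^p(\mathbb R^m)$ for $p<\infty$ unless it vanishes identically.

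\emph{Main case $2<p\le 2m/d$.} Localize so that $M=\{(\xi',\psi(\xi')):\xi'\in U\}$ is the graph of a $C^1$ map $\psi:U\subset\mathbb R^d\to\mathbb R^{m-d}$ on a bounded open $U$, and $\widehat{f}=h(\xi')\otimes\delta(\xi''-\psi(\xi'))$ for a compactly supported distribution $h$ on $U$. Then
\[
f(x',x'')=\big\langle h(\xi'),\,e^{2\pi i(x'\cdot\xi'+x''\cdot\psi(\xi'))}\big\rangle_{\xi'}.
\]
At $p=2$, Plancherel in $x'$ gives $\|f(\cdot,x'')\|_{L^2(\mathbb R^d)}=\|h\|_{L^2(U)}$ (for $h$ a function), independent of $x''$, so integrating in $x''$ forces $\|f\|_{L^2(\mathbb R^m)}=\infty$ unless $h\equiv 0$. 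For $p>2$, the direct Plancherel argument fails and one must exploit the mismatch between the $d$-dimensional Fourier support and the $m-d$ normal directions through a scaling/duality argument: a Knapp-type test-function computation gives that only $h\equiv 0$ is compatible with $f\in L^p$ in the range $p\le 2m/d$, equivalently via a Nikishin--Stein-type factorization reducing the question to slicewise $L^2$-statements, or by dualizing against product-type test functions in $(x',x'')$ to convert the injectivity question into a restriction-style estimate whose sharp threshold is precisely $p=2m/d$.

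The principal obstacle lies in this last step: because $\psi$ is only $C^1$, no curvature information is available and stationary-phase methods do not apply, so the dimensional balance must be extracted purely via $L^p$--$L^2$ comparisons on tangential slices pushed to the endpoint, together with a density/duality argument. That the threshold $p=2m/d$ is sharp is already visible in the introduction through the bound functions $\varphi_\lambda\in L^p(\mathbb R^m)$ iff $p>2m/(m-1)$, corresponding to the case $d=m-1$ (the sphere).
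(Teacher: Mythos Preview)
The paper does not prove this theorem; it simply cites the sphere case to Thangavelu \cite{MR1269222} and the general $C^1$-manifold case to \cite{MR2066426} (Agranovsky--Narayanan). So there is no detailed paper-proof to compare against, and your task was really to supply the argument that the paper outsources.

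Your reduction to compact Fourier support and your treatment of the ranges $1\le p\le 2$ (Hausdorff--Young) and $d=0$ (structure of point-supported distributions) are correct and standard.

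The main case $2<p\le 2m/d$, however, has a genuine gap --- and you say as much yourself. Two concrete problems:

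\textbf{(i) The structure assumption is unjustified.} You write $\widehat{f}=h(\xi')\otimes\delta(\xi''-\psi(\xi'))$, i.e.\ you assume $\widehat{f}$ has transversal order zero. Even on a smooth hypersurface a distribution supported there is in general a finite sum $\sum_{j\le N}\partial_\nu^{\,j}(u_j\,d\sigma)$, and you have not ruled out the higher-order terms. Worse, for a merely $C^1$ manifold the normal-derivative decomposition is itself delicate, so one cannot simply invoke it. The actual proofs in the literature get around this either by first reducing to the smooth case, or by arguments that do not rely on such a structural decomposition at all.

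\textbf{(ii) The endpoint argument is not an argument.} What you write for $p>2$ is a list of keywords (Knapp, Nikishin--Stein, restriction duality) rather than a proof. The Knapp construction shows \emph{sharpness}, not \emph{injectivity}; and ``restriction-style estimates'' for arbitrary $C^1$ manifolds with no curvature are exactly what one does \emph{not} have. The published argument (see \cite{MR2066426}) proceeds quite differently: roughly, one flattens the manifold locally, reduces to the case of a $d$-plane by a change of variables that preserves $L^p$ locally, and then uses the elementary fact that a nonzero function whose Fourier transform is supported on $\mathbb R^d\times\{0\}$ is constant along the complementary $\mathbb R^{m-d}$, hence cannot lie in $L^p(\mathbb R^m)$ for $p<\infty$; the endpoint $p=2m/d$ and the passage from ``tempered'' to ``$L^p$'' require an additional convolution/averaging step. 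None of this appears in your sketch.

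In short: the easy half is fine, but for the decisive range you have outlined a wish list, not a proof. If you want a self-contained argument, follow the local-flattening route rather than invoking curvature-based restriction theory, which is unavailable here.
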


\begin{proof}
When the support is a sphere, this follows from \cite{MR1269222} (see Lemma 2.2 and Theorem 2.2 there). For the general case see \cite{MR2066426} (Theorem 1).
\end{proof}
\section{Spectral projections and Abel summability}

In this section we prove a summability result for the spectral decomposition of the sublaplacian on $L^p$  for $2 \leq p < \infty.$ We follow the methods in \cite{MR1101262}.
For $a \in \mathbb{R}^m$, recall that  \[e_k^a(z,t)=e^{-i\langle a,t \rangle} \varphi_k^{|a|}(z),\] where the scaled Laguerre functions $\varphi_k^\lambda$ for $\lambda > 0,$ defined by
\[\varphi_k^\lambda(z)=L_k^{n-1}\left(\frac{\lambda|z|^2}{2}\right)e^{-\frac{1}{4}\lambda|z|^2}, k=0, 1, 2,\ldots\] in terms of the Laguerre polynomials $L_k^{n-1},$ are the  eigenfunctions of the twisted Laplacian $L_\lambda$ with eigenvalue $(2k+n)|\lambda|$. 
Hence, 
\[\mathcal{L} e_k^a(z,t) = e^{-i\langle a,t \rangle} L_{|a|}\; \varphi_k^{|a|}(z) = (2k+n)|a| e_k^a(z,t).\]

Next we explain the $L^2$ spectral decomposition. 
Applying the Fourier inversion formula in the central variable and using the special Hermite expansion of a function on $\mathbb C^n,$ we obtain
\begin{equation*}
    \begin{split}
    f(z,t)     & =\frac{1}{(2\pi)^m}\int_{\mathbb{R}^m}f^a(z)e^{-i \langle a,t \rangle}\, da \\
         & =\frac{1}{(2\pi)^m}\int_{\mathbb{R}^m}\frac{|a|^n}{(2\pi)^n}\sum\limits_{k=0}^\infty(f^a\times_{|a|}\varphi_k^{|a|}(z))e^{-i \langle a,t \rangle}\, da.\\
         & =\frac{1}{(2\pi)^{n+m}}\int_{\mathbb{R}^m}\sum\limits_{k=0}^\infty f\ast e_k^a(z,t))~|a|^n\, da\\
         & = \frac{1}{(2\pi)^{n+m}} \sum_{k=0}^\infty~\int_{\mathbb{R}^m}~f \ast e_k^a(z, t)~|a|^n~da.
    \end{split}
\end{equation*}
The above gives the $L^2$ spectral decomposition of $f.$  We also have, by the Plancherel formula (see \cite{MR2801610}*{p. 2717},
$$\|f\|_{L^2(G)} = \frac{1}{(2\pi)^{n+m}}~\sum_{k = 0}^\infty~\int_{\mathbb R^m}~|a|^{2n}~\int_{\mathbb C^n}~|f \ast e_k^a (z, 0)|^2~dz~da. $$
Let $\mathcal{A}_k$ denote the spectral projection operator defined by 
\begin{equation}\label{spectral-projection-def}
 \mathcal{A}_k f(z, t) = \int_{\mathbb{R}^m} ~f\ast e_k^a(z,t)~|a|^n~da. 
\end{equation}

Our aim is to write the spectral projection operator $\mathcal{A}_k$ as a convolution operator and prove its $L^p$ boundedness. To write this operator as a convolution operator, we define the kernel,

\begin{align*}  
 A_k(z,t) &=\int\limits_{\mathbb{R}^m} ~e_k^a(z,t)~|a|^n~da \\ 
 &= \int\limits_{\mathbb{R}^m}~ e^{-i \langle a, t \rangle}~ \varphi_k^{|a|}(z)|a|^n~ da.
 \end{align*}
  Since $\varphi_k^{|a|}(z) = L_k^{n-1}\left(\frac{|a||z|^2}{2}\right) e^{-\frac{|a|}{4}|z|^2}$, the kernel $A_k(z,t)$ is a linear combination of functions of the form 
 \[ A_k^j(z,t) = |z|^{2j} \int\limits_{\mathbb{R}^m}~ e^{-i \langle a, t \rangle}~ e^{-\frac{|a|}{4}|z|^2} ~|a|^{n+j}\, da,\; j= 0, 1, \cdots, k.\]
 A simple change of variables shows that 
 \[A_k^j(sz,s^2t) = s^{-Q} A^j_k(z,t), \] which is the required homogeneity for singular integral operators on $G.$

 Using polar coordinates, we obtain
 \[A_k^j(z,t) = c_m |z|^{2j}\int_0^{\infty}\frac{J_{\frac{m}{2}-1}(\lambda |t|)}{(\lambda |t|)^{\frac{m}{2}-1}}e^{-\frac{\lambda}{4}|z|^2} \lambda^{n+m+j-1}\, d\lambda,  \]
 where $c_m$ is a constant that depends only on $m$.
 We prove that $A_k(z,t)$ is a Calder\'on-Zygmund kernel by showing that each $A_k^j(z,t)$ is. Since $A_k^j(z,t)$ is homogeneous of degree $-Q$ and belongs to $C^{\infty}(G \setminus \{0\})$, we need to show that these kernels satisfy the cancellation condition as in Lemma \ref{cancellemma}. Since  $A_k^j(z,t)$ is radial in $t$, it suffices to show the following:
 
 \begin{lemma}\label{intak}
     \[\int_{\mathbb{C}^n} A_k^j(z,1) = 0,~ j = 0, 1, 2, \dots, k.\]
 \end{lemma}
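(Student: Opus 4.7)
My plan is to integrate over $z$ first, observe a cancellation, and then recognize the remaining $a$-integral as essentially the Fourier transform of the constant $1$ on $\mathbb{R}^m$—a delta at the origin, which in particular vanishes at $|t|=1$. Since $A_k^j$ is radial in $t$, fix any unit vector $t\in\mathbb{R}^m$. Formally swapping the order of integration,
\[
\int_{\mathbb{C}^n}A_k^j(z,1)\,dz
=\int_{\mathbb{R}^m}e^{-i\langle a,t\rangle}\,|a|^{n+j}\Bigl(\int_{\mathbb{C}^n}|z|^{2j}e^{-|a||z|^2/4}\,dz\Bigr)\,da.
\]
The inner Gaussian integral is easily evaluated by polar coordinates on $\mathbb{C}^n\cong\mathbb{R}^{2n}$ and the substitution $u=|a||z|^2/4$; the result is $C_{n,j}|a|^{-(n+j)}$ for an explicit positive constant $C_{n,j}$. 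The two powers of $|a|$ cancel, leaving $C_{n,j}\int_{\mathbb{R}^m}e^{-i\langle a,t\rangle}\,da$, a multiple of $\delta(t)$, which is zero at $|t|=1$.

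The above interchange is not absolutely convergent (the constant $1$ is not in $L^1(\mathbb{R}^m)$), so I will regularize by inserting $e^{-\epsilon|a|}$ in the defining integral, yielding a kernel $A_k^{j,\epsilon}$ to which Fubini applies for every $\epsilon>0$. The same calculation, now rigorous, gives
\[
\int_{\mathbb{C}^n}A_k^{j,\epsilon}(z,1)\,dz
=C_{n,j}\int_{\mathbb{R}^m}e^{-i\langle a,t\rangle}e^{-\epsilon|a|}\,da
=C_{n,j}\,c_m\,\frac{\epsilon}{(\epsilon^2+1)^{(m+1)/2}}\xrightarrow{\epsilon\to 0^+}0,
\]
after recognizing the $a$-integral as a multiple of the Poisson kernel evaluated at $|t|=1$.

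The main obstacle is to pass to the limit inside the $z$-integral. For $|z|\ge 1$, the trivial pointwise bound $|A_k^{j,\epsilon}(z,1)|\le C|z|^{-2(n+m)}$ is $\epsilon$-uniform and integrable. For $|z|<1$, the same bound is not integrable, so one must exploit the oscillation in $a$. A convenient way is via the scaling form
\[
A_k^{j,\epsilon}(z,1)=|z|^{2j}\,(|z|^2/4+\epsilon)^{-(n+j+m)}\,\tilde J\!\left(\frac{t}{|z|^2/4+\epsilon}\right),
\]
where $\tilde J(\tau)=\int_{\mathbb{R}^m}e^{-i\langle b,\tau\rangle}|b|^{n+j}e^{-|b|}\,db$. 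Computing $\tilde J$ as iterated Laplacians of the Fourier transform of $e^{-|b|}$ (or of $|b|e^{-|b|}$, according to the parity of $n+j$) yields the decay estimate $|\tilde J(\tau)|\lesssim(1+|\tau|)^{-(n+j+m)}$; substituting this back produces the $\epsilon$-uniform bound $|A_k^{j,\epsilon}(z,1)|\le C|z|^{2j}$ on $\{|z|\le 1\}$, which is integrable. Dominated convergence then concludes.
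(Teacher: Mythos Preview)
Your argument is correct. The paper takes the opposite order of integration: it first evaluates the $a$-integral at $|t|=1$ as
\[
\int_0^\infty \frac{J_{\frac{m}{2}-1}(\lambda)}{\lambda^{\frac{m}{2}-1}}\,e^{-\tau\lambda}\,\lambda^{n+m+j-1}\,d\lambda
=(-1)^{n+j}I_m^{(n+j)}(\tau),\qquad \tau=\tfrac{|z|^2}{4},
\]
where $I_m(\tau)=c_m\,\tau(1+\tau^2)^{-(m+1)/2}$ is exactly the Poisson kernel you obtain after regularizing. The remaining $z$-integral becomes $\int_0^\infty I_m^{(n+j)}(b)\,b^{n+j-1}\,db$; writing $I_m(b)=b\Psi(b)$ with $\Psi(b)=(1+b^2)^{-(m+1)/2}$, the integrand is the total derivative $\frac{d}{db}\bigl[b^{n+j}\Psi^{(n+j-1)}(b)\bigr]$, whose boundary values vanish for $m\ge 2$. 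Thus both proofs hinge on the same Poisson kernel, but the paper never leaves absolute convergence and finishes with a one-line integration by parts, while you swap the order (paying with the $e^{-\epsilon|a|}$ regularization) and reduce the vanishing to the transparent statement ``Fourier transform of the constant $1$ is supported at the origin''. The two computations are in fact linked: repeated integration by parts gives $\int_0^\infty b^{n+j-1}I_m^{(n+j)}(b+\epsilon)\,db=(-1)^{n+j}(n+j-1)!\,I_m(\epsilon)$, which is precisely your regularized value. Your route is more conceptual; the paper's is shorter and avoids the DCT step and the $\tilde J$ decay estimate.
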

 
 \begin{proof}
     We start with the integral 
\begin{equation}\label{im}
     I_m (\tau) = \int_0^{\infty}~ \frac{J_{\frac{m}{2}-1}(\lambda)}{\lambda^{\frac{m}{2}-1}} ~e^{-\tau\lambda} ~\lambda^{m-1}\; d\lambda, ~\tau > 0.
\end{equation}
     Then for any $t \in \mathbb{R}^m$ such that $|t|= 1,$ it is easy to see that (up to a constant)
     $$
          I_m (\tau) = \int_{\mathbb{R}^m}~e^{-i \langle x, t \rangle } ~e^{-\tau|x|}~dx. $$ The above equals the Poisson kernel, 
 $$ c_m \frac{\tau}{(1+\tau^2)^{\frac{m+1}{2}}}$$ for some constant $c_m$.

 Now,
      \begin{align*}
               \int_0^{\infty}~ \frac{J_{\frac{m}{2}-1}(\lambda)}{\lambda^{\frac{m}{2}-1}} ~e^{-\tau\lambda}~ \lambda^{n+m+j-1}~ d\lambda &= \frac{d^{n+j}}{d\tau^{n+j}}\left(I_m (\tau)\right) \\
     &= I_m^{(n+j)}(\tau).
       \end{align*}
      Hence, to prove the lemma, we need to show that 
      \[\int_{\mathbb{C}^n}~|z|^{2j}~ I_m^{(n+j)}(|z|^2)\; dz = 0, ~j = 0, 1, 2, \dots, k.\]
     Since the integrand is radial, this reduces to showing that
      \[ \int_0^{\infty}~ I_m^{(n+j)}(r^2) ~r^{2n+2j-1}~ dr = \frac{1}{2}\int_0^{\infty} ~I_m^{(n+j)}(b) ~b^{n+j-1} \; db = 0.\]
      Now, writing \[\Psi(b) = \frac{1}{(1+b^2)^{\frac{m+1}{2}}}, \]
      we get,
      \[I_m^{(n+j)}(b) = b \Psi^{(n+j)}(b) + (n+j)\Psi^{(n+j-1)}(b).\]
      Hence 
      \begin{align*}
          \int_0^{\infty}~ I_m^{(n+j)}(b) ~b^{n+j-1}\; db &= \int_0^{\infty}~ \Psi^{(n+j)}(b)~ b^{n+j}\; db + (n+j) \int_0^{\infty}~ \Psi^{(n+j-1)}(b)~b^{n+j-1}\; db\\
          &= \lim\limits_{b \to \infty} b^{n+j} \Psi^{(n+j-1)}(b)
      \end{align*}
      which is easily verified to be zero as $m \geq 2$. This proves the lemma.
 \end{proof}
 From Theorem \ref{cancel}, it follows that the operator
 \[ f \mapsto f \ast A_k\]
 is a bounded operator on $L^2(G)$ and therefore (by Theorem \ref{CZp}) bounded  on $L^p(G), 1 < p < \infty$ as well.
 
 Now for  $f\in\mathscr{S}(G),$ the Schwartz space of $G,$ with $f^a$ compactly supported in the variable $a$, it is easy to check that 
 \[\mathcal{A}_kf(z,t) = f \ast A_k(z,t).\]
 It follows that 
  \[\mathcal{A}_kf(z,t) = f \ast A_k(z,t),\quad \forall f \in L^2(G)\]
  and $\mathcal{A}_kf = f \ast A_k$ extends to a bounded operator on $L^p(G), 1 < p < \infty$.
  Hence, we have proved the following theorem.
  \begin{thm}\label{spectraloper}
     The spectral projection operator $\mathcal{A}_k$ is the convolution operator  $ f \mapsto f \ast A_k$ and is bounded on $L^p(G), 1 < p < \infty$.
  \end{thm}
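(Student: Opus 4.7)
The plan is to identify the spectral projection $\mathcal{A}_k$ as convolution with the kernel $A_k(z,t)=\int_{\mathbb{R}^m} e_k^a(z,t)\,|a|^n\,da$ defined above, and then to verify that this kernel fits into the Calder\'on--Zygmund framework provided by Theorems \ref{cancel} and \ref{CZp}. Since the heavy structural work has already been done preceding the statement, the proof proper amounts to assembling these ingredients.

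First I would establish the identification $\mathcal{A}_k f = f \ast A_k$ on a dense subclass. For $f \in \mathscr{S}(G)$ with $f^a$ compactly supported in $a$, Fubini's theorem applies directly to
\[
\mathcal{A}_k f(z,t) = \int_{\mathbb{R}^m} f \ast e_k^a(z,t)\,|a|^n\,da,
\]
swapping the $a$-integral with the convolution integral so that the right-hand side equals $f \ast A_k(z,t)$. This identification, combined with the $L^2$-boundedness to be proved below, then extends by density to all of $L^2(G)$.

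Next I would decompose $A_k = \sum_{j=0}^{k} c_{k,j}\, A_k^j$ using the polynomial expansion of $L_k^{n-1}$, where each $A_k^j$ is smooth on $G \setminus \{0\}$. The change of variable $a \mapsto s^{-2}a$ shows at once that $A_k^j(\delta_s(z,t)) = s^{-Q} A_k^j(z,t)$, so $A_k^j$ is homogeneous of degree $-Q$. The non-trivial point is the cancellation condition, and here I would invoke Lemma \ref{cancellemma}: since $A_k^j$ is radial in $t$, it is enough to show $\int_{\mathbb{C}^n} A_k^j(z,1)\,dz = 0$ for each $j = 0,1,\dots,k$. This is exactly the content of Lemma \ref{intak}, which I expect to be the crux of the argument and which is the step that requires the standing assumption $m \geq 2$ (to make a boundary term at infinity vanish).

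With the cancellation and homogeneity in hand, Theorem \ref{cancel} yields $L^2$-boundedness of $f \mapsto f \ast A_k^j$, and hence of $f \mapsto f \ast A_k$; Theorem \ref{CZp} then upgrades this to $L^p$-boundedness for all $1 < p < \infty$. Finally, functions $f \in \mathscr{S}(G)$ with $f^a$ compactly supported in $a$ are dense in $L^p(G) \cap L^2(G)$, so the identity $\mathcal{A}_k f = f \ast A_k$ (initially valid on this subclass and extended to $L^2$ above) gives the claimed convolution description, and the Calder\'on--Zygmund bound shows the extension to $L^p(G)$ is bounded for $1 < p < \infty$. The main obstacle is really Lemma \ref{intak}; once that is available, the rest of the proof is the routine invocation of the abstract singular integral theorems.
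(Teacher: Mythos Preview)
Your proposal is correct and follows essentially the same route as the paper: decompose $A_k$ into the homogeneous pieces $A_k^j$, verify the cancellation condition via Lemma~\ref{intak} (using Lemma~\ref{cancellemma} to reduce to the radial-in-$t$ criterion), apply Theorems~\ref{cancel} and~\ref{CZp}, and then identify $\mathcal{A}_k f = f \ast A_k$ first on Schwartz functions with $f^a$ of compact support and extend by density. Your emphasis that Lemma~\ref{intak} is the crux, and that it is precisely where $m \geq 2$ enters, matches the paper exactly.
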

  
  Next we show the Abel summability of the spectral decomposition for $f \in L^p(G)$.
  \begin{thm}\label{abelsum}
      Let $ 2 \leq p < \infty$ and $f \in L^p(G)$. Then
      \[\lim\limits_{r \to 1} \sum\limits_{k=0}^{\infty} r^k \int_{\mathbb{R}^m}f \ast e_k^a(z,t)~ |a|^n~ da = f(z, t) \]
      in the $L^p$ norm.
  \end{thm}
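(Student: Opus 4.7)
The plan is to realize the Abel sum as a convolution with an explicit kernel, handle $p=2$ directly from Plancherel, and then upgrade to $2<p<\infty$ by establishing a uniform operator bound and invoking density. For $f\in\mathscr S(G)$ with $f^a$ compactly supported in $\mathbb R^m\setminus\{0\}$, Fubini gives
\[\sum_{k=0}^\infty r^k\int_{\mathbb R^m}f\ast e_k^a(z,t)|a|^n\,da=f\ast P_r(z,t),\qquad P_r:=\sum_{k=0}^\infty r^kA_k.\]
Applying Mehler's generating identity $\sum_k r^kL_k^{n-1}(x)=(1-r)^{-n}e^{-rx/(1-r)}$ to the scaled Laguerre functions yields
\[P_r(z,t)=\frac{1}{(1-r)^n}\int_{\mathbb R^m}e^{-i\langle a,t\rangle}\exp\Bigl(-\tfrac{(1+r)|z|^2}{4(1-r)}|a|\Bigr)|a|^n\,da,\]
which via the Poisson-kernel evaluation of \eqref{im} differentiated $n$ times in the parameter $b=(1+r)|z|^2/(4(1-r))$ reduces to an explicit rational function in $|z|^2$ and $|t|$.

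For $p=2$ the projections $\mathcal A_k$ are mutually orthogonal on $L^2(G)$: $f^a\times_{|a|}\varphi_k^{|a|}$ picks out the component of $f^a$ in the $(2k+n)|a|$-eigenspace of the twisted Laplacian $L_{|a|}$, so the Plancherel identity noted after \eqref{spectral-projection-def} gives
\[\|f-T_rf\|_2^{\,2}=\sum_{k=0}^\infty(1-r^k)^2\|\mathcal A_kf\|_2^{\,2}\xrightarrow{r\to 1^-}0\]
by dominated convergence; in particular $\|T_r\|_{L^2\to L^2}\le 1$.

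For $2<p<\infty$ I would prove a uniform bound $\sup_{0<r<1}\|f\ast P_r\|_p\le C_p\|f\|_p$ by decomposing $P_r=Q_r+R_r$. The piece $Q_r$ is a positive heat-kernel-like factor (essentially the Mehler kernel of the twisted Laplacian convolved against the Poisson kernel in $t$) with uniformly bounded $L^1$ norm, concentrating on Korányi balls of radius $\sqrt{1-r}$ and hence acting as a classical approximate identity on $L^p$. The complement $R_r$ is smooth on $G\setminus\{0\}$, carries precisely the cancellation of Lemma~\ref{intak}, and falls into the scope of Theorems~\ref{cancel} and~\ref{CZp}, yielding a uniform $L^p$ bound. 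Combining this with $L^2$-convergence on $C_c^\infty(G)\subset L^p\cap L^2$ and density, a $3\varepsilon$ argument gives $L^p$-convergence for every $f\in L^p(G)$.

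The main obstacle is the uniform $L^p$ bound: a direct computation shows $P_r\notin L^1(G)$ owing to an $|z|^{-2n}$ singularity near $z=0$ that no rescaling can remove, so no naive approximate-identity argument works. The effective separation of $P_r$ into its heat-concentration part $Q_r$ and its singular-integral remainder $R_r$—where the hypothesis $m\ge 2$ enters through Lemma~\ref{intak} to guarantee uniform cancellation constants—is the technical heart of the argument.
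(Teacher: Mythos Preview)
Your $p=2$ argument via orthogonality and Plancherel is fine, and the reduction to a uniform $L^p$ bound on $T_r\colon f\mapsto f\ast P_r$ is the right move. The genuine gap is the uniform bound itself. You never specify the decomposition $P_r=Q_r+R_r$, and the tools you invoke do not supply uniformity. Theorems~\ref{cancel} and~\ref{CZp} assert boundedness of \emph{homogeneous} kernels satisfying the mean-zero condition, but give no quantitative control of the operator norm in terms of the kernel. Since $P_r$ is itself homogeneous of degree $-Q$ for each fixed $r$, any nonzero piece $Q_r\in L^1(G)$ is necessarily non-homogeneous, hence so is $R_r=P_r-Q_r$, and those theorems no longer apply to it. Applying them directly to $P_r$ yields only an $r$-dependent bound. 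Lemma~\ref{intak} tells you each $A_k$ (and hence $P_r$) satisfies cancellation, but says nothing about the size of $P_r$ as $r\to 1^-$.

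The paper's route is different and avoids this obstacle. Since $P_r$ is not odd, one composes $T_r$ with the Riesz transforms $\mathcal R_j$ in the central variable: because $\sum_j\mathcal R_j^{\,2}=I$ and each $\mathcal R_j$ is bounded on $L^p$, it suffices to bound $\sum_k r^k\mathcal R_j\mathcal A_k$ uniformly. Its kernel is \emph{odd} and homogeneous of degree $-Q$, and Lemma~\ref{lm3}---proved via Christ's uniform estimate for the Hilbert transform along odd homogeneous curves (Theorem~\ref{christ})---bounds the $L^p$ operator norm of any such kernel by $C_p\int_{\mathbb C^n}\int_{S^{m-1}}|K(w,u)|\,dw\,du$. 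A direct computation of this slice integral, using the Hecke--Bochner identity and the function $I_{m+2}$ from \eqref{im}, shows it is dominated by a constant times $(1+r)^{-n}$, hence uniformly bounded for $0<r<1$. This explicit quantitative handle is exactly what your proposed decomposition is missing.
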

As in \cite{MR1101262} (see Theorem 3.3 and Corollary 3.4 there, also the first paragraph in Page 375), it is enough show that the operators
\begin{equation}
 T_rf(z,t) =  \sum\limits_{k=0}^{\infty} r^k \int_{\mathbb{R}^m}~f \ast e_k^a(z,t) ~|a|^n~ da, 
 \end{equation}
 are uniformly bounded on $L^p(G),$  $2 \leq p < \infty$.  To prove this, we need the following lemma.

\begin{lemma}\label{lm3}
    Let $K(z,t)$ be an odd kernel which is homogeneous of degree $Q$. Then the operator norm of $ f \mapsto f\ast K$ on $L^p(G), 1 < p < \infty$ is bounded by \[C_p\int_{\mathbb{C}^n}\int_{S^{m-1}}|K(w,u)|\, dwdu\]
for some constant $C_p$ depending only on $p$.\end{lemma}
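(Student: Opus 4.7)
The plan is to adapt the Calder\'on--Zygmund method of rotations to the homogeneous group $G$, reducing $f\mapsto f\ast K$ to a weighted superposition of Hilbert transforms along odd homogeneous curves, which are uniformly $L^p$-bounded by Theorem \ref{christ}.

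First, I would introduce polar coordinates adapted to the dilations: every $(z,t) \in G$ with $t \neq 0$ has a unique representation $\delta_{s^{1/2}}(w,u) = (s^{1/2}w, su)$ with $s > 0$, $w \in \mathbb{C}^n$, $u \in S^{m-1}$, and a Jacobian computation gives $dz\, dt = s^{n+m-1}\, dw\, d\nu(u)\, ds$. Combined with the homogeneity $K(\delta_{s^{1/2}}(w,u)) = s^{-(n+m)}K(w,u)$ and the substitution $r = s^{1/2}$, the convolution becomes
\[
f \ast K(x) \;=\; 2\int_{\mathbb{C}^n}\int_{S^{m-1}} K(w,u)\left(\operatorname{p.\!v.}\int_0^\infty f(x \cdot \delta_r(w,u)^{-1})\, \frac{dr}{r}\right) dw\, d\nu(u).
\]

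Second, for each fixed $(w,u)$ I would define $\gamma_{w,u}: \mathbb{R} \to G$ by $\gamma_{w,u}(r) = \delta_r(w,u)$ if $r > 0$ and $\gamma_{w,u}(r) = \delta_{-r}(-w,-u)$ if $r < 0$. A short calculation shows $\gamma_{w,u}(-r) = \gamma_{w,u}(r)^{-1}$, so $\gamma_{w,u}$ is an odd homogeneous curve in the sense of Theorem \ref{christ} with $Y_+ = (w,u) = -Y_-$. Using the oddness $K(-w,-u) = -K(w,u)$ to symmetrize over the antipodal change of variable $(w,u) \mapsto (-w,-u)$ (both $dw$ and $d\nu(u)$ are invariant under it), the one-sided principal value combines with its reflection into the two-sided Hilbert transform along $\gamma_{w,u}$, yielding
\[
f \ast K(x) \;=\; \int_{\mathbb{C}^n}\int_{S^{m-1}} K(w,u)\, H_{\gamma_{w,u}}f(x)\, dw\, d\nu(u).
\]

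Finally, Minkowski's integral inequality together with the uniform-in-$\gamma$ bound $\|H_{\gamma_{w,u}}f\|_p \leq C_p \|f\|_p$ supplied by Theorem \ref{christ} immediately gives the claimed estimate. The step I expect to demand the most care is the symmetrization: tracking the substitution $r \mapsto -r$ together with the definition of $\gamma_{w,u}$ on $\{r<0\}$ and verifying that the identity $\gamma_{w,u}(-s)^{-1} = \delta_s(w,u)$ actually matches the two one-sided principal values into $H_{\gamma_{w,u}}f$, rather than into some other singular operator. Once that bookkeeping is in place, the estimate follows directly from the three ingredients above.
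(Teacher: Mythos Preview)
Your proposal is correct and follows essentially the same route as the paper's proof: introduce ``polar'' coordinates adapted to the dilations to reduce $f\ast K$ to an integral over $\mathbb{C}^n\times S^{m-1}$ of one-sided principal values, use the oddness of $K$ under $(w,u)\mapsto(-w,-u)$ to combine these into the two-sided Hilbert transform $H_{\gamma_{(w,u)}}f$ along the odd homogeneous curve $\gamma_{(w,u)}$, and then apply Minkowski together with the uniform $L^p$ bound of Theorem~\ref{christ}. The paper carries out the same changes of variable in explicit coordinates rather than the group notation $x\cdot\delta_r(w,u)^{-1}$, and you have correctly identified the symmetrization step as the one requiring the most care; the paper handles it exactly as you outline.
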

\begin{proof}
Using the homogeneity of the kernel $K$, we can write $f \ast K$ as,
         \begin{align*}
            f\ast K(z,t) & = \int\limits_{\mathbb{C}^n}\int\limits_{\mathbb{R}^m}f(z-w,t-s-\frac{1}{2}[z,w])K(w,s)\, dwds\\
            & = \int\limits_{\mathbb{C}^n}\int\limits_0^\infty\int\limits_{S^{m-1}}f(z-w,t-ru-\frac{1}{2}[z,w])r^{-n-m}K\left(\frac{w}{\sqrt{r}},u\right)r^{m-1}\,drdudw\\
            & = \int\limits_0^\infty\int\limits_{S^{m-1}}\int\limits_{\mathbb{C}^n}f(z-\sqrt{r}w,t-ru-\frac{\sqrt{r}}{2}[z,w])K(w,u)r^{-1}\,dwdrdu\\
            & = 2\int\limits_{\mathbb{C}^n}\int\limits_{S^{m-1}}\int\limits_0^\infty f(z-rw,t-r^2u-\frac{r}{2}[z,w])K(w,u)\, \frac{dr}{r}dudw.\\
                 \end{align*}
          Since $K$ is odd, the above integral becomes
          
    \begin{multline*}
        2\int\limits_{\mathbb{C}^n}\int\limits_{S^{m-1}}\left(\int\limits_0^\infty f(z-rw,t-r^2u-\frac{r}{2}[z,w])\, \frac{dr}{r}\right.\\ -\left. \int\limits_0^\infty f(z+rw,t+r^2u+\frac{r}{2}[z,w])\, \frac{dr}{r}\right) K(w,u)\,dudw
    \end{multline*}

Now, the inner integral, 

   $$ \int\limits_0^\infty ~f(z-rw,t-r^2u-\frac{r}{2}[z,w])\, \frac{dr}{r}- \int\limits_0^\infty~ f(z+rw,t+r^2u+\frac{r}{2}[z,w])\, \frac{dr}{r}$$
   equals 
\begin{equation}\label{ht}
   \int\limits_0^\infty~ f((z,t)(\delta_r(w,u))^{-1})\, \frac{dr}{r}- \int\limits_0^\infty~ f((z,t)(\delta_r(w,u)))\, \frac{dr}{r}
\end{equation}
since 
\begin{equation*}
    \begin{split}
        \int\limits_{-\infty}^0f((z,t)(\delta_{-r}(-w,-s))^{-1})\, \frac{dr}{r} & =
         \int\limits_{-\infty}^0 f((z,t)\delta_{-r}(w,s))\, \frac{dr}{r}\\
        & = -\int\limits_0^\infty f((z,t)\delta_{r}(w,s))\, \frac{dr}{r}.\\
    \end{split}
\end{equation*}

The expression \eqref{ht} is the Hilbert transform $H_{\gamma_{(w,s)}}f$ of $f$ along the curve $\gamma_{(w,s)}$ in $G$, given by,
\begin{equation*}
    \gamma_{(w,s)}(r)=\left\{\begin{array}{cc}
    \delta_r(w,s)     & \mbox{for }r>0  \\
    \delta_{-r}(-w,-s)     &\mbox{for }r\leq 0 
    \end{array}\right.
\end{equation*}

Hence 
\[f\ast K(z,t) =\int\limits_{\mathbb{C}^n}\int\limits_{S^{m-1}}H_{\gamma_{(w,s)}}f(z,t)K(w,s)\, dwds.\]
Therefore,
 \begin{equation*}
    \begin{split}
        \|f\ast K\|_p & \leq \int\limits_{\mathbb{C}^n}\int\limits_{s^{m-1}}\|H_{\gamma_{(w,s)}}f\|_p|K(w,s)|\, dwds\\
        & \leq C_p\|f\|_p\int\limits_{\mathbb{C}^n}\int\limits_{s^{m-1}}|K(w,s)|\, dwds\\
    \end{split}
\end{equation*}
where $C_p$ is a constant that depends only on $p$ (by Theorem \ref{christ}).
\end{proof}

Now to prove the uniform boundedness of $\|T_r\|_p$ using the previous lemma, we note that $T_r$ is a convolution operator with kernel
\[\sum\limits_{k=0}^{\infty} r^k \int_{\mathbb{R}^m} e_k^a(z,t) |a|^n\; da  \]
which we compute using the following generating function identity of Laguerre polynomials.
\[ \sum\limits_{k=0}^{\infty} r^k L_k^{\alpha}(x) = (1-r)^{-\alpha-1} e^{-\frac{rx}{1-r}},\; |r| < 1.\]
It then follows that
\[\sum\limits_{k=0}^{\infty} r^k \int_{\mathbb{R}^m} e_k^a(z,t) |a|^n\; da = (1-r)^{-n} \int_{\mathbb{R}^m} e^{-i \langle a, t\rangle} e^{-\frac{1}{4}\frac{1+r}{1-r}|a||z|^2} |a|^n \; da. \]
Since this is not an odd kernel, we bring in the Riesz transform in the $t$- variable. Define the operator $\mathcal{R}_j$ by 
$$(\mathcal{R}_jf )^a(z) = \frac{a_j}{|a|}~f^a(z),  $$
which is just the $j$-th Riesz transform in the central variable. Clearly, $\mathcal{R}_j$ is bounded on $L^p(G)$ for $1 < p < \infty.$ Now, define the operator
 
\[\mathcal{R}_j\mathcal{A}_kf(z,t) = \int\limits_{\mathbb{R}^m} f \ast  e_k^a(z,t) \frac{a_j}{|a|}|a|^n \; da.\] 
Since $\sum\limits_{j = 1}^m \mathcal{R}_j^2 = I$, it suffices to prove that the operator norm of $\sum\limits_{k=0}^{\infty}r^k \mathcal{R}_j\mathcal{A}_k$ is indepedent of $r.$
Now the kernel  of the above operator is 
\[\sum\limits_{k=0}^{\infty} r^k \int_{\mathbb{R}^m}~ e_k^a(z,t) \frac{a_j}{|a|}|a|^n\; da = (1-r)^{-n} \int_{\mathbb{R}^m} ~e^{-i \langle a, t\rangle} ~e^{-\frac{1}{4}\frac{1+r}{1-r}|a||z|^2} ~\frac{a_j}{|a|}~ |a|^n \; da.  \]
Writing in terms of the polar coordinates and using the Hecke-Bochner identity, we obtain that the above integral is a constant multiple of 
\[(1-r)^{-n}~t_j~\int_0^{\infty}\frac{J_{\frac{m}{2}}(\lambda|t|)}{(\lambda|t|)^{\frac{m}{2}}}~ e^{-\frac{1}{4}\frac{1+r}{1-r}\lambda|z|^2}~\lambda^{n+m-1}\; d\lambda. \]
When $t \in S^{m-1}$, we can write the above expression using the function $I_m$ (see \eqref{im} )  as
\[ (1-r)^{-n} t_j I_{m+2}^{(n-2)}\left(-\frac{1}{4}\frac{1+r}{1-r}|z|^2\right). \]
Since \[\int_{\mathbb{C}^n} \int_{S^{m-1}} \left|(1-r)^{-n} t_j I_{m+2}^{(n-2)}\left(-\frac{1}{4}\frac{1+r}{1-r}|z|^2\right)\right| \; dz dt \leq  C \frac{1}{(1+r)^n}  \int_0^{\infty} |I_{m+2}^{(n-2)}(a)|a^{n-1}\; da, \]
the proof is complete as it can  easily be verified that
\[\int_0^{\infty} |I_{m+2}^{(n-2)}(a)|a^{n-1}\; da \leq C.\]

\begin{remark}
 It is worth noting that, in the above proofs the explicit expression of the kernels were not used (unlike the arguments in \cite{MR1101262}). Using the generating function for the Laguerre polynomials, it is possible to obtain an expression for the kernel of the spectral projections $\mathcal{A}_k.$ Indeed, $$ \sum_{k=0}^\infty~r^k~\int_{\mathbb R^m}~e_k^a(z, t)~|a|^n~da = (1-r)^{-n}~\int_{\mathbb{R}^m} ~e^{-i \langle a, t\rangle} ~e^{-\frac{1}{4}\frac{1+r}{1-r}|a||z|^2}~ |a|^n \; da.$$ Using polar coordinates in the above leads to the expression (up to a constant) $$ (1-r)^{-n}~\int_0^{\infty}\frac{J_{\frac{m}{2}-1}(\lambda|t|)}{(\lambda|t|)^{\frac{m}{2}-1}}~ e^{-\frac{1}{4}\frac{1+r}{1-r}\lambda|z|^2}~\lambda^{n+m-1}\; d\lambda. $$  Substituting the well known integral formula for the Bessel function in the above, we get (again up to a constant)
 $$\int_{-1}^{1}~(1-s^2)^{\frac{m-3}{2}}~\left ( (1-r)^{-n}~\int_0^\infty~e^{is\lambda |t|}~e^{-\frac{1}{4}\frac{1+r}{1-r}\lambda|z|^2}~\lambda^{n+m-1}\; d\lambda \right )~ds.$$ Now, the kernel $A_k$ is the $k^{\text{th}}$-derivative of the above with respect to $r,$ evaluated at $ r = 0.$ However, the inner integral can be computed as in \cite{MR1101262}*{p. 362}. We obtain that the expression $$  (1-r)^{-n}~\int_0^\infty~e^{is\lambda |t|}~e^{-\frac{1}{4}\frac{1+r}{1-r}\lambda|z|^2}~\lambda^{n+m-1}\; d\lambda$$ equals (ignoring some constants that depend only on $n$ and $m$)
 $$ (1-r)^{m}~\left [ (|z|^2-4is|t|) + r (|z|^2+4is|t|) \right ]^{-n-m}.$$
 Differentiating the above $k$ times and evaluating at $r = 0,$ we obtain that 
 \begin{equation}\label{spec-expression}
A_k(z, t) = c_{n, m}~\int_{-1}^{1}~(1-s^2)^{\frac{m-3}{2}}~\left [ 
\sum_{\ell =0}^{\text{min}\{k, m-1\}}~(-1)^{\ell}~\binom{m-1}{\ell}~P_{k-\ell}(z, s|t|)
\right ]~ds
 \end{equation}
 where $c_{n, m}$ is a constant depending only on $n$ and $m$ and 
 $$P_j(z, s|t|) = \frac{(n+m-1+j)!}{j!}~\frac{(|z|^2-4is|t|)^j}{(|z|^2+4is|t|)^{n+m-1+j}}
 \left (1 + \frac{j}{n+m-1+j}~\frac{(|z|^2+4is|t|)}{(|z|^2-4is|t|)} \right ).$$
 When $m$ is odd, $p = \frac{m-3}{2}$ is a non-negative integer and one can expand the term $(1-s^2)^p$ in \eqref{spec-expression} and prove the cancellation condition for the kernel $A_k$ by a somewhat long induction argument. However, this does not seem to work when $m$ is even.

\end{remark}
\section{Injectivity of spherical means}

In this section we prove the theorems stated in the introduction. We follow the proofs given in \cite{MR1269222} closely. The important point is that the function $e_k^a(z, t)$ are eigenfunctions for the three spherical mean operators we have considered.

First we look at the spherical means with respect, to the  the normalized surface measure $\mu_r$ on the sphere $\{z\in \mathfrak{v}:|z|=r \}$. As in   (\ref{twisted}), we can see that
\[ e_k^a \ast \mu_r(z,t) = e^{-i \langle a, t \rangle} \varphi_k^{|a|} \times_{|a|} \mu_r(z). \]
Since (see \cite{MR1269222}) \[\varphi_k^{|a|} \times_{|a|} \mu_r(z) = \frac{k! (n-1)!}{(k+n-1)!} \varphi_k^{|a|}(r) \varphi_k^{|a|} (z)\]
we obtain,
\begin{equation}\label{eigen1}
    e_k^a\ast \mu_r(z,t)=~c_{k, n}~\varphi_k^{|a|}(r)~e_k^a(z,t),~\forall~(z, t) \in G,
\end{equation}
where $c_{k, n} = \frac{k! (n-1)!}{(k+n-1)!}.$
    Now, let $f\in L^p(G), 1\leq p \leq \frac{2m}{m-1}$ and assume that $f \ast \mu_r$ vanishes identically. Convolving $f$ with a smooth approximate identity, we may assume that $f\in L^p$ for $2 \leq p \leq \frac{2m}{m-1}$. From the above identity \eqref{eigen1}, the spectral decomposition of $f\ast \mu_r$ is given by
    \[f\ast \mu_r(z,t)=\sum\limits_{k=0}^{\infty} \int\limits_{\mathbb{R}^m}~c_{k, n}~ \varphi_k^{|a|}(r)~f\ast e_k^a(z,t)~|a|^n\, da .\]
    If $f\ast \mu_r(z,t)=0$ for all $(z,t)$, by Theorem \ref{abelsum},
    \[\lim\limits_{s\to 1}\sum\limits_{k=0}^\infty~ c_{k, n}~s^k~ \int\limits_{\mathbb{R}^m}~\varphi_k^{|a|}(r)~f\ast e_k^a(z,t)~|a|^n\, da=0 \]
    where the convergence is in $L^p(G)$. Applying the $k^{\text{th}}$ spectral projection operator $\mathcal{A}_k$ and using Theorem \ref{spectraloper} we obtain that
    \[\int\limits_{\mathbb{R}^m}\varphi_k^{|a|}(r) \left(f^a \times_{|a|} \varphi_k^{|a|}\right)(z)~ e^{-i\langle a, t \rangle} |a|^n\, da=0, \quad \forall (z,t), \forall k=0,1,2,\ldots \]
    Arguing as in \cite{MR1269222}*{p.276} (also see \cite{MR1714433}*{pp.257-258}), we obtain that, for almost all $z\in \mathbb{C}^n,$ the  support of $f^a \times_{|a|}\varphi_k^{|a|}(z)$, the distributional Fourier transform of $\mathcal{A}_kf(z,\cdot)$, is contained in the zero set of $L_k^{n-1}(\frac{1}{2}|a|r^2)$, which is a finite union of spheres in $\mathbb R^m.$ But this implies, by Theorem \ref{na}, that  $\mathcal{A}_kf(z,t)$ is zero as $\mathcal{A}_kf\in L^p$ for $1 < p \leq \frac{2m}{m-1}$. This finishes the proof of Theorem \ref{firstinjectivity}.

Next, we show that the above range is optimal by an example. For a fixed $k \geq 1$ and $s > 0,$ let 
\[\begin{split}
    F(z,t) & = \frac{J_{\frac{m}{2}-1}(s|t|)}{(s|t|)^{\frac{m}{2}-1}}~\varphi_k^{s}(z) \\
    &= \int\limits_{|a|=s}~e^{-i \langle a,t \rangle}~\varphi_k^{|a|}(z)~ d\sigma_s(a)\\
    & = \int\limits_{|a| =s}~e_k^a(z, t)~d\sigma_s(a),
\end{split}\]
where $\sigma_s$ is the normalized surface measure on the sphere $\{a \in \mathbb R^m:~|a| = s\}.$
An easy computation using \eqref{eigen1} shows that,
\[ F \ast \mu_r(z,t)=~c_{k, n}~\varphi_k^s(r)~F(z,t), \]
for all $(z, t) \in G.$
Choosing $s$ suitably, we can make sure that $\varphi_k^s(r)=0$. From the asymptotics of the Bessel function it is clear that $F \in L^p(G)$ if and only if $p>\frac{2m}{m-1}$, which proves our claim.

Now we look at the  bi-spherical means defined using the measures $\mu_{r,s}=\mu_r\times\nu_s$.
Recall that the measure $\mu_{r, s}$ for $r >0, s >0$ was defined by 
$$ \mu_{r, s}(f) = \int_{|z|=r}~\int_{|t| =s}~f(z, t)~d\mu_r(z)~d\nu_s(t),$$ where $d\mu_r$ and $d\nu_s$ are normalized surface measures on the spheres $\{z:~|z| =r\}$ and $\{t:~|t| = s\}$ respectively. 
Assume that $f \in L^p(G)$ for $2 \leq  p \leq \frac{2m}{m-1}$ and $f \ast \mu_{r, s}$ vanishes identically. Proceeding as in the earlier proof, using the identity \eqref{eigen1}, we get
    \[\begin{split}
        e_k^a\ast \mu_{r,s}(z,t) & = c_{k, n}~e^{-i\langle a,t \rangle}~\frac{J_{\frac{m}{2}-1}(s|a|)}{(s|a|)^{\frac{m}{2}-1}}~\varphi_k^{|a|}(r)~\varphi_k^{|a|}(z)\\
        & = ~c_{k, n}~\frac{J_{\frac{m}{2}-1}(s|a|)}{(s|a|)^{\frac{m}{2}-1}}~\varphi_k^{|a|}(r)~e_k^{a}(z,t).
    \end{split}\]
 Continuing exactly as above we get that the distributional Fourier transform of $\mathcal{A}_kf(z,t)$ in the $t$ variable is supported in the zero set of (as a function of $a$) \[\frac{J_{\frac{m}{2}-1}(s|a|)}{(s|a|)^{\frac{m}{2}-1}}~\varphi_k^{|a|}(r),\] which is a union of infinitely many spheres in $\mathbb R^m.$ It then follows that $\mathcal{A}_kf=0$ from Theorem \ref{na}, if $1\leq p\leq \frac{2m}{m-1}$.
This completes the proof of Theorem \ref{secondinjectivity}.

Next we show that the above range is the best possible. To this end, we need to recall some results on bi-radial functions on an $H$-type group $G$. Define the averaging operator (see \cite{MR1703838}*{p. 221} $\Pi$ on integrable functions on $G$ by

$$ \Pi(f)(z, t) = \int_{S^{m-1}}~\int_{S^{2n-1}}~f(|z|u, |t|v)~d\mu(z)~d\nu(v),$$
where $d\mu$ and $d\nu$ are the normalized surface measures on the unit spheres $\{z:~|z| =1\}$ and $\{t:~|t|=1\}$ respectively. The operator $\Pi$ is then an averaging projector satisfying several properties (see \cite{MR1703838}*{p. 220}.  

A bi-radial function on $G$ is a function $f$ that satisfies $\Pi(f) =f.$ Clearly, $f$ is bi-radial if and only if $f$ is radial in both the $z$ and $t$ variables. For $k = 0, 1, 2, \cdots$ and $\lambda > 0,$ define the functions $\Phi_k^\lambda(z, t)$ by
$$ \Phi_k^\lambda(z, t) = C(k, n, m)~\varphi_k^\lambda(z)~\frac{J_{\frac{m}{2}-1}(s|t|)}{(s|t|)^{\frac{m}{2}-1}},$$ where $C(k, n, m)$ is a constant so that $\Phi_k^\lambda(0, 0) = 1.$ We have the following result about the class of integrable bi-radial functions, denoted by $L^1(G)^\# .$

\begin{thm}\label{biradial}
\begin{enumerate}
    \item The space $L^1(G)^\#$ is a commutative Banach algebra under convolution.
    \item The space of multiplicative linear functionals on $L^1(G)^\#$ coincides with the collection $\{\Phi_k^\lambda:~\lambda >0,~k =0, 1, 2, \cdots \}.$
    
\end{enumerate}

\end{thm}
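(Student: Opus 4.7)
The plan is to take the Fourier transform in the central variable and reduce everything to the well-understood convolution structure of radial functions on $\mathbb{C}^n$ under the twisted convolution $\times_{|a|}$. Two key observations drive the whole argument: (i) if $f$ is bi-radial, then $f^a(z)$ is radial in $z$ and depends on $a$ only through $|a|$; and (ii) for arbitrary $f, g \in L^1(G)$ one has the identity $(f \ast g)^a = f^a \times_{|a|} g^a$, which follows from the same computation as in \eqref{twisted}.

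For part (1), applied to bi-radial $f, g$, observation (ii) shows that $(f \ast g)^a$ is again radial in $z$ and depends only on $|a|$, since the $|a|$-twisted convolution of two radial functions on $\mathbb{C}^n$ is radial. Hence $L^1(G)^\#$ is closed under convolution, and being the image of the bounded averaging projector $\Pi$ it is a closed subspace of $L^1(G)$, hence a Banach algebra. Commutativity then reduces to the classical fact that radial functions on $\mathbb{C}^n$ commute under $\times_{|a|}$: expanding $f^a, g^a$ in the Laguerre basis $\{\varphi_k^{|a|}\}_k$ and using the diagonal orthogonality of these functions under twisted convolution (they are joint eigenfunctions of $L_{|a|}$), one obtains $f^a \times_{|a|} g^a = g^a \times_{|a|} f^a$, so $f \ast g = g \ast f$.

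For part (2), the forward direction is straightforward. Writing a bi-radial $f$ as $f^a(z) = \sum_j \hat f(j, |a|)\, \varphi_j^{|a|}(z)$ with coefficients depending only on $|a|$, and using that $\Phi_k^\lambda$ can be represented (up to constants) as an integral of $e_k^a(z,t)$ over the sphere $\{|a| = \lambda\}$ as in the example following Theorem \ref{firstinjectivity}, the pairing $\chi_{k,\lambda}(f) := \int_G f\, \overline{\Phi_k^\lambda}\, dz\, dt$ collapses, after polar decomposition in $a$ and Fourier inversion, to a constant multiple of the scalar $\hat f(k, \lambda)$. The convolution identity then upgrades to $\widehat{f \ast g}(k, \lambda) = \hat f(k, \lambda)\, \hat g(k, \lambda)$, giving multiplicativity. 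The converse is where the real work lies: any multiplicative $\chi \in (L^1(G)^\#)^*$ is represented by a bounded bi-radial $\phi$, and the relation $\chi(f \ast g) = \chi(f)\, \chi(g)$ forces $\phi$ to be a joint eigenfunction for convolution by every $f \in L^1(G)^\#$. The main obstacle is extracting from this joint eigenfunction property, together with the bi-radiality and boundedness of $\phi$, the precise form $\phi = \Phi_k^\lambda$. This requires combining the Laguerre/Fourier decomposition of $\phi$ with the Plancherel theorem on $G$ to show, first, that the joint eigenfunction condition concentrates the Fourier support of $\phi$ on a single sphere $\{|a| = \lambda\} \subset \mathfrak{z}^*$ (rather than on a larger set), and second, that the internal Laguerre index must likewise be concentrated at a single $k$; only then does $\phi$ admit the claimed closed form.
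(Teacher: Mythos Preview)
The paper does not give its own proof of this theorem; it simply cites Proposition~5.3 of \cite{MR1703838}. So there is no ``paper's approach'' to compare against beyond the reference. Your proposal is therefore an attempt at a self-contained argument, and it should be judged on its own merits.

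Part (1) and the forward direction of part (2) are essentially fine. The reduction to twisted convolution of radial functions on $\mathbb{C}^n$ is exactly the right mechanism; closure of $L^1(G)^\#$ under $\ast$ and commutativity follow as you indicate. For the forward direction of (2) you should be more careful with constants: the functional $f\mapsto \int_G f\,\Phi_k^\lambda$ is multiplicative precisely because of the normalisation $\Phi_k^\lambda(0,0)=1$ built into $C(k,n,m)$; your phrase ``a constant multiple of $\hat f(k,\lambda)$'' obscures this and, taken literally, would give $\chi(f\ast g)=c\,\hat f\hat g$ versus $\chi(f)\chi(g)=c^2\hat f\hat g$.

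The converse of (2) is where your proposal is not yet a proof. You correctly obtain a bounded bi-radial representative $\phi$ for a multiplicative $\chi$, but then invoke ``the Plancherel theorem on $G$'' to force the Fourier support of $\phi$ onto a single sphere $\{|a|=\lambda\}$ and a single Laguerre index $k$. The difficulty is that $\phi\in L^\infty(G)$, not $L^2(G)$, so Plancherel does not apply directly, and the distributional Fourier transform of $\phi$ in the $t$-variable need not a priori be a measure whose ``support'' you can localise in this way. A cleaner route is to use the functional equation: multiplicativity of $\chi$ is equivalent (via the averaging projector $\Pi$) to the product formula $\Pi(_{(z,t)}\phi)=\phi(z,t)\,\phi$, i.e.\ $\phi$ is a bounded spherical function for the Gelfand pair. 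From this one shows that $\phi$ is a smooth joint eigenfunction of $\mathcal{L}$ and of $T=-\Delta_t$; the two eigenvalues determine a pair $(\lambda,k)$ (with $\lambda>0$ forced by boundedness), and uniqueness of bounded bi-radial joint eigenfunctions then pins down $\phi=\Phi_k^\lambda$. As written, your outline identifies the target but does not supply the mechanism that actually closes the argument.
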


For the proof of above see \cite{MR1703838} Proposition 5.3. We also need the product formula satisfied by the functions $\Phi_k^\lambda.$

\begin{Prop}\label{product}
Let $\Phi = \Phi_k^\lambda$ for some $k$ and $\lambda.$ Let $_{(z,t)}\Phi$ denote the left translate of the function $\Phi$ by the point $(z, t).$ Then, $$ \Pi(_{(z, t)}\Phi)(w, s) = \Phi(z, t)~\Phi(w, s).$$
\end{Prop}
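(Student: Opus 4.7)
The plan is to reduce the product formula to the twisted-convolution identity
\[
\varphi_k^\lambda\times_\lambda\mu_{|w|}(z)=c_{k,n}\,\varphi_k^\lambda(|w|)\,\varphi_k^\lambda(z)
\]
already used to derive \eqref{eigen1}, together with the Fourier transform of surface measure on a sphere in $\mathbb R^m$. As a preliminary step, the normalization $\Phi_k^\lambda(0,0)=1$ combined with the Fourier-Bessel identity
\[
\int_{|a|=\lambda}e^{-i\langle a,t\rangle}\,d\tilde\sigma_\lambda(a)=C_0\,\eta_\lambda(|t|),\qquad C_0:=2^{m/2-1}\Gamma(m/2),
\]
where $\eta_\lambda(r):=J_{m/2-1}(\lambda r)/(\lambda r)^{m/2-1}$ and $d\tilde\sigma_\lambda$ is the normalized surface measure on $\{a\in\mathbb R^m:|a|=\lambda\}$, forces $C(k,n,m)=c_{k,n}C_0$ and hence the integral representation
\[
\Phi_k^\lambda(z,t)=c_{k,n}\int_{|a|=\lambda}e_k^a(z,t)\,d\tilde\sigma_\lambda(a).
\]

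Writing $\Phi=\Phi_k^\lambda$, I next expand the left hand side using $(z,t)^{-1}(w',s')=(w'-z,\,s'-t-\tfrac12[z,w'])$:
\[
\Pi(_{(z,t)}\Phi)(w,s)=\int_{|w'|=|w|}\int_{|s'|=|s|}\Phi\bigl(w'-z,\,s'-t-\tfrac12[z,w']\bigr)\,d\mu_{|w|}(w')\,d\nu_{|s|}(s').
\]
Use $\varphi_k^\lambda(w'-z)=\varphi_k^\lambda(z-w')$ (radiality) and represent the $\eta$-factor via the Fourier-Bessel formula above. For each $a$ in the resulting outer $a$-integral, choose the orthonormal basis of $\mathfrak v$ adapted to $a$ as in \eqref{onb1}, so that $\langle a,[z,w']\rangle=\lambda\,\Im(z\cdot\overline{w'})$. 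The integrand then factors into pieces depending separately on $a$, $s'$ and $w'$. The inner $w'$-integral becomes exactly the twisted convolution $\varphi_k^\lambda\times_\lambda\mu_{|w|}(z)$, whose value $c_{k,n}\,\varphi_k^\lambda(|w|)\,\varphi_k^\lambda(|z|)$ is basis-independent and may be pulled out of the $a$-integral. The $s'$-integral and the remaining $a$-integral each produce $C_0\eta_\lambda(|s|)$ and $C_0\eta_\lambda(|t|)$ respectively, again by the Fourier-Bessel identity. Collecting everything and using $C(k,n,m)=c_{k,n}C_0$, the product rearranges to $\bigl(C\,\varphi_k^\lambda(z)\,\eta_\lambda(|t|)\bigr)\bigl(C\,\varphi_k^\lambda(|w|)\,\eta_\lambda(|s|)\bigr)=\Phi(z,t)\Phi(w,s)$.

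The main delicate point is the consistency of the adapted-basis choice, which a priori varies with $a$: one must verify that the $w'$-integral, realized in different bases for different $a$, gives the same value. This is guaranteed because the answer $c_{k,n}\varphi_k^\lambda(|w|)\varphi_k^\lambda(|z|)$ depends only on $|w|$ and $|z|$, both of which are basis-invariant. A sanity check at $(z,t)=(0,0)$ is immediate, since $\Pi(\Phi)(w,s)=\Phi(w,s)$ by bi-radiality. An alternative route that avoids the explicit Fourier-Bessel calculation is to invoke the multiplicativity on $L^1(G)^\#$ of the character $f\mapsto\int f\,\Phi$ from Theorem~\ref{biradial} and deduce the pointwise product identity by duality against bi-radial test functions, using the self-adjointness of $\Pi$ with respect to integration against bi-radial functions.
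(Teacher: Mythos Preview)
Your argument is correct. The crucial step is the claim that the inner integral
\[
\int_{|w'|=|w|}e^{\frac{i}{2}\langle a,[z,w']\rangle}\,\varphi_k^\lambda(z-w')\,d\mu_{|w|}(w')
\]
takes the same value for every $a$ on the sphere $|a|=\lambda$, and your justification is the right one: in the basis \eqref{onb1} adapted to $a$ this becomes $\varphi_k^\lambda\times_\lambda\mu_{|w|}$ at the new coordinates of $z$, and the known value $c_{k,n}\,\varphi_k^\lambda(|w|)\,\varphi_k^\lambda(|z|)$ depends only on the basis-independent quantities $|w|$ and $|z|$. Once that constant is pulled out, the remaining $a$- and $s'$-integrals are two independent applications of the Fourier--Bessel identity, and the normalization $C(k,n,m)=c_{k,n}C_0$ makes the constants match.

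The paper itself does not prove this proposition; it simply refers the reader to Proposition~2.3 of \cite{MR1164603}. Your proof is therefore not a variant of the paper's argument but a self-contained replacement for the external citation, built entirely from tools already developed here (the identity behind \eqref{eigen1} and the Fourier transform of surface measure). The alternative you mention at the end---deriving the product formula from the multiplicativity in Theorem~\ref{biradial}---is indeed the abstract Gelfand-pair route and is how the cited reference proceeds; it is cleaner conceptually but relies on knowing that the projector $\Pi$ comes from averaging over a compact group acting by automorphisms of $G$, a fact that requires its own verification for general $H$-type groups. Your direct computation avoids that and stays within the analytic framework of the present paper.
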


For a proof, see Proposition 2.3 in \cite{MR1164603}. Now, a simple computation shows that the identity
in Proposition \ref{product} reduces to 

\[ \Phi_k^\lambda\ast \mu_{r,s}(z,t)=\Phi_k^\lambda(r,s)~\Phi_k^\lambda(z,t).\]
Choosing $\lambda$ and $ k > 0$ such that $\Phi_k^\lambda(r,s)=0,$ we get
$$ \Phi_k^\lambda \ast \mu_{r, s}(z, t) = 0~\forall~(z, t) \in G, $$
which proves our claim as $\Phi_k^\lambda(z,t)\in L^p$ if and only if $p>\frac{2m}{m-1}$.

Finally, we look at the  homogeneous spherical means defined using the measure $\sigma_r$. First we deal with the case $m \geq 2.$ Recall the homogeneous norm on $G,$ given by \[|(z,t)|=(|z|^4+|t|^2)^\frac{1}{4}. \]
Also, recall that there exists a unique Radon measure $\sigma$ on the unit sphere $\Sigma=\{(z,t):|(z,t)|=1 \}$ such that for all $f\in L^1(G)$
\[ \int\limits_G f(g)\, dg=\int\limits_0^\infty\int\limits_\Sigma f(\delta_r(z,t))\, d\sigma(z,t)\, r^{Q-1}dr\] where $\delta_r$ denote the dilations that act as automorphisms of $G.$ The measures $\sigma_r,$ for $ r> 0$ are defined by
\[ \sigma_r(f)=\sigma(\delta_r f)=\int\limits_\Sigma f(\delta_r(z,t))\, d\sigma(z,t).\]
The homogeneous spherical means of a function $f$ is then defined as the convolution $f \ast \sigma_r,$ of $f$ with $\sigma_r.$

 We have the formula for the measure $\sigma_s, s > 0$ given by \[\begin{split}
    \sigma_s(f) &=\int f(z,t)\, d\sigma_s(z,t) \\
    & = 2\int\limits_0^1\int\limits_{|z|=1}\int\limits_{|t|=1}~f(srz, s^2\sqrt{1-r^4}t)\, d\sigma(z)d\nu(t)\, r^{2n-1}~(1-r^4)^\frac{m-2}{2}~dr.
\end{split}\] where $\sigma$ and $\nu$ are the normalized surface measures on the unit spheres in $\mathbb{C}^n$ and $\mathbb{R}^m,$ respectively. See  \cite{fischer2008etude}*{Proposition 2.7} or \cite{MR2273787}*{p. 102} for the proof of this formula.
    
    It follows that
    \[f\ast \sigma_s=2\int\limits_0^1 ~f\ast \mu_{_{sr, s^2\sqrt{1-r^4}}}~ r^{2n-1}~(1-r^4)^\frac{m-2}{2}~dr  \]
    where $\mu_{sr, s^2\sqrt{1-r^4}}$ is the bi-spherical means defined earlier.
    
    As above we compute
    \[\begin{split}
       e_k^a \ast \sigma_s(z,t)& =2\int\limits_0^1 ~e_k^a\ast \mu_{sr, s^2\sqrt{1-r^4}}~(z,t)~r^{2n-1}~(1-r^4)^\frac{m-2}{2}~dr\\
        &= c_{k, n}~\left(2\int\limits_0^1 \frac{J_{\frac{m}{2}-1}(s^2\sqrt{1-r^4}|a|) }{(s^2\sqrt{1-r^4}|a|)^{\frac{m}{2}-1}}~\varphi_k^{|a|}(r)~r^{2n-1}~(1-r^4)^\frac{m-2}{2}~dr \right)\, e_k^a(z,t).
    \end{split} \] Write $|a|=\lambda$, and notice that the function
   \begin{equation}\label{eig}
   \lambda \mapsto \int\limits_0^1 ~\frac{J_{\frac{m}{2}-1}(s^2\sqrt{1-r^4}\lambda) }{(s^2\sqrt{1-r^4}\lambda)^{\frac{m}{2}-1}}~\varphi_k^{\lambda}(r)~r^{2n-1}~(1-r^4)^\frac{m-2}{2}dr
  \end{equation}
   is holomorphic for $\Re \lambda>0$ and so the above function has at most countably many zeros $\lambda\in (0,\infty)$. Now the proof can be completed as above for the range $1\leq p\leq \frac{2m}{m-1}$, if $m\geq 2$. We believe that the range obtained is optimal. This will be true if the function in \eqref{eig} has a zero in $(0, \infty).$

    When $m=1,$ $G=\mathbb{H}^n,$ the Heisenberg group. The formula for the measure $\sigma_s$ takes the following form (see \cite{MR898880}*{p. 95}:
    
$$ \sigma_s = c_n~\int_{-\frac{\pi}{2}}^{\frac{\pi}{2}}~\mu_{s \sqrt{\cos \theta}, \frac{1}{2}s^2 \sin \theta}~(\cos \theta)^{n-1}~d\theta,$$
where the measure $\mu_{r, s}$ is the normalized surface measure on the sphere $\{(z, s) \in \mathbb H^n:~|z| = r \}.$ Now the proof can be completed as earlier. We omit the details.
    This completes the proof of Theorem \ref{thirdinjectivity}.

\begin{remark}
The Abel summability result for the spectral decomposition will be true for all $1 < p < \infty,$ if we can estimate the operator norm of $\mathcal{A}_k.$
It is a natural question whether a two radius theorem is true for functions in $L^p(G)$ for $\frac{2m}{m-1} < p \leq \infty$ and whether our results can be proved for averages over $K$-orbits where $(G \rtimes K, K)$ is a Gelfand pair as in the case of the Heisenberg group. We hope to return to these questions and some others in the near future.

    \end{remark}

\begin{remark}
When $1 \leq p \leq 2,$ it is possible to take the Fourier transform in the central variable and prove the injectivity results for the spherical means with weaker conditions of growth on the function. See \cite{RR} 

\end{remark}

\bibliography{refs}
\end{document}